\newtheorem{thm}{Theorem}
\newtheorem{defn}{Definition}
\newtheorem{prop}[thm]{Proposition}
\newtheorem{lem}[thm]{Lemma}
\newcommand{\Aut}{\mathrm{Aut}}
\newcommand{\Hom}{\mathrm{Hom}}
\newcommand{\F}{\mathbb {F}}
\def\={\;=\;} \def\+{\,+\,}       \def\Q{\Bbb Q} \def\O{\mathcal O}  \def\C{\Bbb C}  \def\R{\Bbb R}  
  \def\Aut{\text{Aut}}    
  \def\Z{\mathbb Z}
\def\K{\mathcal K}
\def\F{\mathcal F}
\def\L{\mathcal L}
\def\O{{\mathcal O}}
\def\Ar{\!\!\mathrm{ar}}
\def\ar{\mathrm{ar}}
\def\num{\mathrm{num}}
\def\Num{\mathrm{Num}}
\def\di{\mathrm{div}}
\def\qqan{\qquad\mathrm{and}\qquad}
\def\dis{\displaystyle}
\def\ov{\overline}
\def\GL{\text{GL}(V,A)}
\def\raw{\longrightarrow}
\def\law{\longleftarrow}
\newcommand*{\mydprime}{^{\prime\prime}\mkern-1.2mu}
\begin{document}
\title{\bf {\Large{Arithmetic Central Extensions\\ and \\Reciprocity Laws for Arithmetic Surface}}}
\author{\bf  K. Sugahara and L. Weng}
\date{}
\maketitle

\begin{abstract} Three types of reciprocity laws for arithmetic surfaces are established.  
For these around a point or along a vertical curve, 
we first construct $K_2$ type central extensions, then  introduce reciprocity symbols, 
and finally prove the law as an application of Parshin-Beilinson's theory of adelic complex. 
For reciprocity law along a horizontal curve, we first introduce  a new  type 
of arithmetic central extensions,  then apply our arithmetic adelic cohomology theory 
and  arithmetic intersection theory to prove the related reciprocity law.

All this can be interpreted within the framework of arithmetic central extensions. We add an appendix
to deal with some basic structures of such extensions.
\end{abstract}
\tableofcontents
\section{Introduction}

About 50 years ago, Tate \cite{T} developed a theory of residues for curves 
using traces and adelic cohomologies.  Tate's work was 
integrated 
with the $K_2$ central extensions by Arbarello-De Concini-Kac \cite{ADCK}. 
Reciprocity laws for algebraic surfaces were established by Parshin in \cite{P2}.
Later, these reciprocity laws were reproved by Osipov \cite{O1}, 
based on Kapranov's dimension theory \cite{K}. Osipov constructs
dimension two central extensions and hence establishes the reciprocity 
law 
for algebraic surfaces using Parshin's adelic theory \cite{P1}. More recently, a
categorical proof of Parshin's reciprocity law was found by Osipov-Zhu \cite{OZ}.
In essence, Osipov's 
construction may be viewed as $K_2$ type theory of central extensions
developed by Brylinski-Deligne \cite{BD}. 

However, to establish reciprocity laws for arithmetic surfaces,
algebraic $K_2$ theory of central extensions is not sufficient: while it works for 
reciprocity laws around points and along vertical curves, it fails when we treat horizontal curves.
To remedy this, instead, we first develop a new theory of arithmetic central extensions, 
based on the fact that for exact sequence of metrized $\R$-vector saces of finite dimensional
$$\overline V_*:\qquad 0\to \overline V_1\to \overline V_2\to \overline V_3\to 0,$$
there is a volume discrepancy $\gamma(\overline V_*)$.
Accordingly, for an $\R$-vector space $V$, not necessary to be finite dimensional, 
a subspace $A$, and commensurable subspaces $B,\,C$, following \cite{ADCK}, we have the group
$$\GL(V,A):=\{g\in\Aut(V)\,:\, A\sim gA\},$$
 the $\R$-line $$(A|B):=\lambda(A/A\cap B)^*\otimes \lambda(B/A\cap B),$$
and the natural contraction isomorphism
$$\alpha:=\alpha_{A,B,C}: (A|B)\otimes (B|C)\simeq (A|C).$$
Note that $A/A\cap B$ and $B/A\cap B$ are finite dimensional $\R$-spaces, we hence can 
introduce metrics on them and hence obtaining the metrized $\R$-line
$\overline{(A|B)}$. In general, the contraction map $\alpha_{A,B,C}$ of (4.3) of  \cite{ADCK} does not give an isometry. We denote
the corresponding discrepancy by $\gamma(\overline{\alpha_{A,B,C}})$ and hence the isometry
$\ov{\alpha}_{A,B,C}:=\alpha_{A,B,C}\cdot \gamma(\overline{\alpha_{A,B,C}})$ so that we obtain 
a canonical isometry
$$\ov \alpha:=\ov \alpha_{A,B,C}: \ov{(A|B)}\otimes \ov{(B|C)}\cong \ov{(A|C)}.$$
In parallel, for a pair of commensurable (metrized) subspaces $A,B$ and $A',B'$, the natural isomorphism $\beta$
introduced in '4.4) of  \cite{ADCK} induced a canonical isometry
$$\ov\beta_{A,B;A',B'}:\ov{(A|B)}\otimes \ov{(A'|B')}\raw \ov{(A\cap A'|B\cap B')}\otimes \ov{(A+A'|B+B')}.$$
The up-shot of this consideration then leads to the following construction of arithmetic central extension 
$\dis{\widehat{\GL}^{\Ar}}$ of $\GL$ is given by
$$ {\widehat{\GL}}^{\Ar}:=\Big\{(g,a)\,:\, g\in\GL, a\in \ov{(A|gA)}, a\not=0 \Big\}$$
together with the multiplication
$$(g,a)\circ(g',a'):=(gg',a\circ g(a'))$$ where 
$$ab:=a\circ b:=\ov\alpha_{A,B,C}(a\otimes b)\in \ov{(A|C)}\qquad\forall a\in \ov{(A|B)},\ b\in \ov{(B|C)}.$$
Our first result is the following analogue of 
\vskip 0.20cm
\noindent
{\bf Proposition A.} 
{\it \begin{itemize}
\item[(1)] $\big({\widehat{\GL}}^{\Ar},\circ,(e,1)\big)$ forms a group;
\item[(2)] There is a canonical central extension of groups
$$1\to \R^*\buildrel\iota\over\raw {\widehat{\GL}}^{\Ar}\buildrel\pi\over \raw \GL\to e.$$
\end{itemize}}


Consequently, for elements $(g,a),\,(h,b)\in {\widehat{\GL}}^{\Ar}$, their commutator is given by
$$[(g,a),\,(h,b)]=\big([g,h],a\circ g(b)\circ (ghg^{-1})(a^{-1})\circ[g,h](b^{-1}).$$
In particular, from Theorem A(2), we have, if $g$ and $h$ commute,
$$\langle g,h\rangle:=\langle g,h\rangle_A:=a\circ g(b)\circ ghg^{-1}(a^{-1})\circ[g,h](b^{-1})\in \R^*.$$
We will show that $\langle g,h\rangle$ is well-defined.
Moreover, we have the following
\vskip 0.20cm
\noindent
{\bf Proposition B.} ({\bf Arithmetic Reciprocity Law}) {\it Let $A,\,B$ be two subspaces of $V$. For 
$g,h\in\GL\cap \mathrm{GL}(V,B)$, we have}
$$
\ov\beta\big(\langle g,h\rangle_A\otimes \langle g,h\rangle_B\big)=\langle g,h\rangle_{A\cap B}\otimes
\langle g,h\rangle_{A+B}.
$$
In particular, if $g$ and $g$ commutes, we have
$$\langle g,h\rangle_A\langle g,h\rangle_B=\langle g,h\rangle_{A\cap B}\langle g,h\rangle_{A+B}.$$

As a special case, if we assume that the metric system is rigid, that is to say,  as to be defined in \S?, we then recover
the constructions of \cite{ADCK}.

To go further, we consider the local field $\R((t))$ of Laurent series with $\R$ coefficients. For each finite dimensional
sub-quotient space of $\R((t))$, we may identify it with a standard form
$V_{m,n}:=\sum_{i=m}^n\R t^i$ for suitable integer $m,\,n\in \Z$.  We assign a metric on $V_{m,n}$ based on 
the standard Eulidean metric of $\R$ with $(t^i,t^j)=\delta_{ij}, \forall m\leq i,j\leq n.$
Then, for any $f(t),\, g(t)\in \R((t))$, if we write $f(t)=t^{\nu_t(f)}f_0(t),\ g(t)=t^{\nu_t(g)}g_0(t)$, then
$\dis{\big(f(t)\,\R[[t]]\,\big|\, g(t)\,\R[[t]]\big)}$ can be explicitly calculated. As a direct consequence,
we have $$\nu_{\R((t))}(f,g):=\log\langle f,g\rangle_{\R[[t]}=\log\frac{\ |f_0(0)^{\nu_t(g)}|\ }{|g_0(0)^{\nu_t(f)}|}.$$

The pairing $\langle f,g\rangle_{\R[[t]}$ is in fact the reciprocity symbol along horizontal curve at infinity. To introduce
reciprocity symbol at finite places, we start with corresponding 2 dimensional local fields $L$, $k_L((u)), k_L\{\{u\}\}$ with $k_L$ finite extensions of $\Q_P$.  On $L$, there exists a discrete valuation of rank 2:
$(\nu_1,\nu_2): L^*\to \mathbb Z\oplus\mathbb Z.$ With the help of this, then we can define a reciprocity symbol
$$\nu_L: K_2(L)\to {\mathbb Z},\qquad (f,g)\mapsto\left|\begin{matrix}\nu_1(f)&\nu_1(g)\\ \nu_2(f)&\nu_2(g)\end{matrix}\right|.$$

With all this, we are now ready to state our reciprocity laws. Let $\pi: X\to\mathrm{Spec}\O_F$ be a regular arithmetic surface defined over a number field $F$ with generic fiber $X_F$. Let $C$ be an irreducible curve on $X$ and $x$ a closed point of $X$. As usual, see e.g., \cite{P2}, 
we obtain an Artinian ring $K_{C,x}$ which is a finite direct sum of two dimensional local fields $L_i$.
Accordingly, we define $$\nu_{C,x}:=\oplus_i[k_i:\F_q]\cdot\nu_{L_i}.$$ Our main theorem of this paper is the following
\vskip 0.20cm
\noindent
{\bf Main Theorem.} ({ Reciprocity Law for Arithmetic Surfaces})

{\it \begin{itemize}
\item [(1)] For a fixed point $x\in X$,
$$\sum_{C:\,x\in C}\nu_{C,x}(f,g)=0,\qquad\forall f,g\in k(X)^*,$$
where $C$ run over all irreducible curves on $X$ which pass through $x$;

\item [(2)] For a fixed vertical prime divisor $V$ on $X$,
$$\sum_{x:\, x\in V}\nu_{V,x}(f,g)=0,\qquad\forall f,g\in k(X)^*,$$
where $x$ run over all closed points of $X$ which lie on $V$;

\item [(3)] For a fixed Horizontal prime divisor $E_P$ on $X$ corresponding an algebraic point $P$ on $X_F$.
For each infinite place $\sigma$ of $F$, denote by $P_{\sigma,j}$ be corresponding closed points on $X_{F_\sigma}$.
Then
$$
\sum_{x:\, x\in E_P}\nu_{E_P,x}(f,g)\cdot \log q_x+\sum_\sigma\sum_jN_\sigma\cdot \nu_{P_{\sigma,j}}(f,g)=0,\qquad\forall f,g\in k(X)^*.
$$
\end{itemize}}

Our proof of this theorem is as follows: for the first two,  similar  as in \cite{O1}, we first construct a $K_2$-central extension of $k(X)^*$ with the help of Kapranov's dimension theory, then we
interpret $\nu_{C,x}$ as a commutator of lifting of elements in some $K_2$ central extension of the group $K(X)^*$, and finally use the splitting of the central extension of adeles.

During this process, we discover that the fundamental reason for our reciprocity law along a vertical curve 
is the Riemann-Roch in dimension one and the intersection in dimension two. With the help of this, with a bit struggle, we finally 
can establish the reciprocity law for horizontal curves with a new construction of arithmetic central extension, arithmetic intersection 
in dimension two and a refined arithmetic Riemann-Roch theorem for arithmetic curves under the frame work of our arithmetic 
adelic theory built up in \cite{SW}.

\section{Reciprocity Laws in Dimension Two}
\subsection{Arithmetic Adelic Complex}

Let $P$ be an algebraic point of $X_F$. Denote by $E_P$ the corresponding prime horizontal divisor of $X$, 
$\cal I_{E_P}$ the ideal sheaf of $E_P\subset X$,
and $\ov E_P$ its arithmetic compactification. Let $\F$ be a coherent sheaf on $X$.
We introduce an arithmetic adelic complex ${\cal A}_{\ov E_P,*}^{\,\,\Ar}(\F)$ by
$$ 
{\cal A}_{\ov E_P,*}^{\,\,\Ar}(\F):=\lim_{\substack{\raw\\ n}}\lim_{\substack{\law\\ m:\, m\geq n}}
{\Bbb A}_{X,*}^{\,\,\Ar}(\F\otimes {\cal I}_{E_P}^n\big/{\cal I}_{E_P}^m),
$$
where ${\Bbb A}_{X,*}^{\,\,\Ar}$ denotes the arithmetic adelic functor introduced in \S 1.2.3 of \cite{SW}.
Since ${\cal A}_{\ov E_P,*}^{\,\,\Ar}(\F)$ is defined over an infinitesimal neighborhood of horizontal curve
$\ov E_P$ in $\ov X$, the complex consists of three terms. For example, when $\cal L$ is an invertible
sheaf, a direct calculation implies that ${\cal A}_{\ov E_P,*}^{\,\,\Ar}(\L)$ is given by
$$
\widehat {k(X)}_{E_P}\times\Big(\prod_{x\in E_P}\big(B_x\otimes_{\widehat{\O}_{X,x}}{\cal L}\big)
\times \big(B_P\otimes_{\widehat{\O}_{X,x}}{\cal L}\,\widehat\otimes_\Q\R\big)\Big)\raw \Bbb A_{\ov E_P}^{\,\,\Ar}
$$
where $B_x:=\O_{E_P,x}((u)),\ B_P=\O_{X_F}\big|_P((u))$ and 
$\Bbb A_{\ov E_P}^{\,\,\Ar}:=\prod_{x\in \ov E_P}'\widehat {k(E_P)}_x((u))$.
That is, its elements are given by ${\bf a}=(a_x)_{x\in\ov E_P}$,  where $a_x=\sum_ia_{i,x}u^i$ 
satisfying that, for any fixed $i$, $(a_{ix})_{x\in \ov E_P}$ is an usual adele of the arithmetic curve   $\ov E_P$.
In particular, ${\cal A}_{\ov E_P,*}^{\,\,\Ar}(\ov \O_X)$ is given by
$$
\widehat {k(X)}_{E_P}\times\big(\prod_{x\in E_P}\O_{E_P,x}((u))
\times \O_{X_F}\big|_P((u))\,\widehat\otimes_\Q\R\big)\raw {\prod}_{x\in \ov E_P}'\widehat {k(E_P)}_x((u)).
$$
Consequently, if we fix a non-zero rational section $s$ of $\L$ such that $s$ does not vanish on $E_P$, 
we may write $\L|_{E_P}$ as 
$\O_{E_P}(\sum_xn_x[x])$ and write $\L_F:=\L|_{X_F}$ as $\L_F=\O_{X_F}(\sum_{Q\in X_F}m_Q[Q])$.
Here $\sum_xn_x[x]=\mathrm{div}(s|_{E_P})$ and $\sum_{Q\in X_F}m_Q[Q]=\mathrm{div}(s|_{X_F})$.
Accordingly, we obtain a complex
$$
\begin{aligned}
\widehat {k(X)}_{E_P}\times\Big(\prod_{x\in E_P}\frak m_{E_P,x}^{-n_x}((u))
\times \prod_{Q\in X_F}\O_{X_F}(Q)^{\otimes m_Q}&\big|_P((u))\,\widehat\otimes_\Q\R\Big)\\
&\raw 
{\prod}_{x\in \ov E_P}'\widehat {k(E_P)}_x((u)).
\end{aligned}
$$
Similarly, for a different non-zero section $s'$ of $\L$ such that $s$ does not vanish on $E_P$, 
we may write  $\L|_{E_P}=\O_{E_P}(\sum_xn_x'[x])$ 
and  $\L_F=\O_{X_F}(\sum_{Q\in X_F}m_Q'[Q])$. Hence, we have the corresponding complex
$$
\begin{aligned}
\widehat {k(X)}_{E_P}\times\Big(\prod_{x\in E_P}\frak m_{E_P,x}^{-n_x'}((u))
\times \prod_{Q\in X_F}\O_{X_F}(Q)^{\otimes m_Q'}&\big|_P((u))\,\widehat\otimes_\Q\R\Big)\\
&\raw {\prod}_{x\in \ov E_P}'\widehat {k(E_P)}_x((u)).
\end{aligned}
$$

\subsection{Numerations in terms of Arakelov intersection}
\subsubsection{Case I}

For a metrized line bundle $\ov\L$ on $X$, set
$$
W_{\ov \L}^{\ar}:=\Big(\prod_{x\in E_P}B_x\otimes_{\widehat{\O}_{X,x}}{\cal L}\Big)
\times \Big(B_P\otimes_{\widehat{\O}_{X,x}}{\ov\L}\,\widehat\otimes_\Q\R\Big).
$$
To count it, for the non-zero rational section $s$ of $\L$ as above, we set
$$
\begin{aligned}
W_{\ov \L,s}^{\ar}:=\Big(\prod_{x\in E_P}\frak m_{E_P,x}^{-n_x}((u))\Big)
\times \Big(\prod_{Q\in X_F}\O_{X_F}(Q)^{\otimes m_Q}&\big|_P((u))\,\widehat\otimes_\Q\R\Big)\\
&\times {\bf e}\Big(\int_{X_\infty}-\log\|s\|d\mu\Big).
\end{aligned}
$$
Similarly, for the section $s'$ as above, we get
$$
\begin{aligned}
W_{\ov \L,s'}^{\ar}:=\Big(\prod_{x\in E_P}\frak m_{E_P,x}^{-n_x'}((u))\Big)
\times \Big(\prod_{Q\in X_F}\O_{X_F}(Q)^{\otimes m_Q'}&\big|_P((u))\,\widehat\otimes_\Q\R\Big)\\
&\times {\bf e}\Big(\int_{X_\infty}-\log\|s'\| d\mu\Big).
\end{aligned}
$$
In particular, for a metrized line bundle $\ov L_1$ satisfying $\L_1\hookrightarrow \L$ with a non-zero
rational section $s_1$ of $\L_1$ which does not vanish along $E_P$, we have
$$
\begin{aligned}
W_{\ov \L,s}^{\ar}\big/W_{\ov \L_1,s_1}^{\ar}=\Big(\prod_{x\in E_P}&\frak m_{E_P,x}^{-n_x+n_{1,x}}((u))\Big)\\
&\times \Big(\prod_{Q\in X_F}\O_{X_F}(Q)^{\otimes m_Q-m_{1Q}}\big|_P((u))\,\widehat\otimes_\Q\R\Big)\\
&\hskip 4.0cm \times {\bf e}\Big(\int_{X_\infty}-\log\frac{\|s\|\ }{\|s_1\|_1} d\mu\Big).
\end{aligned}
$$
To numerate them, we use Arakelov intersection \cite{L}. Let $g(P,Q)$ be the Rrakelov-Green function 
and $G(P,Q)=e^{g(P.Q)}$. Then we introduce the following
\begin{defn}
\begin{itemize}
\item [(1)] $\num_0\big(\frak m_{E_P,x}\big):=-\log q_x;$
\item [(2)] $\num_0\big(\O_{X_F}(Q)|_P\big):=g(Q,P);$
\item [(3)] $\num_0\big(\O_{E_P,x}[[u]]\big/u^n \O_{E_P,x}[[u]]:=n;$
\item [(4)] $\num_0\big(\O_{X_F,P}[[u]]\big/u^m \O_{X_F,P}[[u]]:=m.$
\end{itemize}
\end{defn}
\begin{prop} Let $\ov L_1$, resp. $\ov \L$, be  a motorized line bundle on $X$ satisfying $\L\hookrightarrow \L_1$. And let 
$s_1$ and $s_1'$, resp.  $s$ and $s'$, be  two non-zero rational sections  of $\L_1$, resp. of $\L$, 
which do not vanish  along $E_P$. There exist  
canonical isometries of metrized $\R$-torsors
$$
\begin{aligned}
\Num\big(W_{\ov \L,s}^{\ar}\big)\cong&\ \Num\big(W_{\ov \L,s'}^{\ar}\big),\\[0.2em]
\Num\big(W_{\ov \L_1,s_1}^{\ar}\big/W_{\ov \L,s}^{\ar}\big)\cong&\ \Num\big(W_{\ov \L_1,s_1'}^{\ar}\big/W_{\ov \L,s'}^{\ar}\big).
\end{aligned}
$$
\end{prop}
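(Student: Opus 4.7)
Setting $f := s'/s \in k(X)^*$, which does not vanish along $E_P$, I would realize the asserted isomorphism as multiplication by $f$ on $W_{\ov\L,s}^{\ar}$, and reduce the isometry condition to the Arakelov product formula on $\ov X$. Concretely, $f|_{E_P} \in k(E_P)^*$ carries $\frak m_{E_P,x}^{-n_x}((u))$ to $\frak m_{E_P,x}^{-n_x'}((u))$ because $n_x'-n_x = \nu_x(f|_{E_P})$; $f|_{X_F} \in k(X_F)^*$ converts the horizontal twists $\O_{X_F}(Q)^{\otimes m_Q}$ into $\O_{X_F}(Q)^{\otimes m_Q'}$ because $m_Q'-m_Q = \nu_Q(f|_{X_F})$; and the exponential factor transforms by ${\bf e}\big(-\int_{X_\infty}\log|f|\,d\mu\big)$ because $\|s'\| = |f|\cdot\|s\|$. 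Together these produce a canonical bijection $W_{\ov\L,s}^{\ar} \to W_{\ov\L,s'}^{\ar}$ of the underlying $\R$-torsors.

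The real content is that this bijection becomes an isometry after applying $\Num$. Evaluating the discrepancy of numerations using Definition~(1)--(4), I expect the three-term expression
$$
\sum_{x\in E_P}\nu_x(f|_{E_P})\log q_x \;+\; \sum_{Q\in X_F}\nu_Q(f|_{X_F})\,g(Q,P)\;-\;\int_{X_\infty}\log|f|\,d\mu,
$$
which is precisely the Arakelov intersection $\widehat{\di}(f)\cdot\ov E_P$: the first sum is the finite intersection of $\di(f)$ with the horizontal prime $E_P$, the second is the sum of Green-function pairings at the horizontal points $P$ above each archimedean place, and the third is the archimedean Green-current correction. Since $\widehat{\di}(f)$ is an arithmetic principal divisor, this intersection vanishes by the product formula, and the bijection is an isometry. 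This proves the first isomorphism.

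The second isomorphism follows by applying the first part independently to $\L$ (with sections $s,s'$) and to $\L_1$ (with sections $s_1, s_1'$), and then taking the quotient of the induced canonical isometries on $\Num$. Equivalently, one can realize it directly as multiplication by the single rational function $h := (s_1'/s_1)\cdot(s/s')\in k(X)^*$, which does not vanish on $E_P$, and reduce the isometry condition once more to the vanishing of $\widehat{\di}(h)\cdot\ov E_P$.

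The hard part will be the careful reconciliation of conventions, verifying that the $\num_0$-discrepancy computed above matches the Arakelov intersection $\widehat{\di}(f)\cdot\ov E_P$ on the nose. One must fix the normalization of the archimedean measure $d\mu$, the sign and factor for $-\log\|\cdot\|$, and the choice between the Green function $g(P,Q)$ and its exponential $G(P,Q)=e^{g(P,Q)}$ as used in \cite{L}, so that no spurious factor obstructs the direct appeal to the product formula. Once the conventions are aligned, the product formula delivers the desired isometry without further computation.
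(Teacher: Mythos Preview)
Your proposal is correct and follows essentially the same route as the paper: both reduce the isometry to an Arakelov-intersection computation using the $\num_0$ values of Definition~1. The only cosmetic difference is that the paper evaluates $\num_0$ of each side separately and identifies both with the section-independent quantity $\deg_{\ar}(\ov\L|_{\ov E_P})=c_{1,\ar}(\ov\L)\cdot\ov E_P$, whereas you take their difference and recognize it as $\widehat{\di}(s'/s)\cdot\ov E_P$, which vanishes because principal Arakelov divisors have zero intersection---two phrasings of the same fact.
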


\begin{proof} With the construction above, our proof for two isometries are similar. 
We here only give the details for the first one.
Clearly, the numerations for the  Laurent series parts of both sides are the same, 
we only need to deal with the coefficient part of both side. Hence, it suffices to show that
{\footnotesize$$
\begin{aligned}
&\num_0\Big(\Big(\prod_{x\in E_P}\frak m_{E_P,x}^{-n_x}\Big)
\times \Big(\prod_{Q\in X_F}\O_{X_F}(Q)^{\otimes m_Q}\big|_P\,\widehat\otimes_\Q\R\Big)
\times {\bf e}\Big(\int_{X_\infty}-\log\|s\|d\mu\Big)\Big) \\
&=\num_0\Big(\Big(\prod_{x\in E_P}\frak m_{E_P,x}^{-n_x'}\Big)
\times \Big(\prod_{Q\in X_F}\O_{X_F}(Q)^{\otimes m_Q'}\big|_P\,\widehat\otimes_\Q\R\Big)
\times {\bf e}\Big(\int_{X_\infty}-\log\|s'\|d\mu\Big)\Big).
\end{aligned}
$$}
By definition, the left hand side is equal to the logarithm of
$$
\prod_{x\in E_P}q_x^{n_x}\prod_{Q\in X_F}G_\infty(P,\di(s_F))\cdot {\bf e}\Big(\int_{X_\infty}-\log\|s\|d\mu\Big),
$$ 
which, by the Arakelov intersection theory \cite{L}, is simply equal to 
$\deg_{\ar}(\ov \L|_{\ov E_P})=c_{1,\ar}(\ov\L)\cdot \ov E_P$.
Similarly, the right hand side is equal to the logarithm of
$$
\prod_{x\in E_P}q_x^{n_x'}\prod_{Q\in X_F}G_\infty(P,\di(s_F'))\cdot {\bf e}\Big(\int_{X_\infty}-\log\|s'\|d\mu\Big),
$$ 
which  is also equal to $\deg_{\ar}(\ov\L|_{\ov E_P})$. 
\end{proof}

\begin{defn} For metrized line bundles $\ov\L_i$ with non-zero rational sections $s_i,\ s_i'$ (i=1,\,2), 
which do not vanish  along $E_P$, we define a metrized $\R$-torsor 
$\big[W_{\ov \L_1,s_1}^{\ar}\big|W_{\ov \L_2,s_2}^{\ar}\big]_2$ by
{\footnotesize$$
\big[W_{\ov \L_1,s_1}^{\ar}\big|W_{\ov \L_2,s_2}^{\ar}\big]_2
:=\lim_{\substack{\law\\ (\ov \L,s)\\ \L\hookrightarrow \L_1, \L_2}}
\Hom_\R\Big(\Num\big(W_{\ov \L_1,s_1}^{\ar}\big/W_{\ov \L,s}^{\ar}\big),
\Num\big(W_{\ov \L_1,s_2}^{\ar}\big/W_{\ov \L,s}^{\ar}\big)\Big).
$$}
\end{defn}

\begin{prop} With the same notation as above, we have a natural isometry of metrized $\R$-torsors
$$
\big[W_{\ov \L_1,s_1}^{\ar}\big|W_{\ov \L_2,s_2}^{\ar}\big]_2
=\big[W_{\ov \L_1,s_1'}^{\ar}\big|W_{\ov \L_2,s_2'}^{\ar}\big]_2.
$$
In particular, the space $\big[W_{\ov \L_1}^{\ar}\big|W_{\ov \L_2}^{\ar}\big]_2$ is well-defined.
\end{prop}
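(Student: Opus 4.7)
The plan is to establish the isometry by constructing compatible termwise isometries in the defining inverse limit and then passing to the limit. First, I fix an arbitrary index $(\ov\L, s)$ in the system, where $\L\hookrightarrow \L_1$, $\L\hookrightarrow \L_2$, and $s$ is a non-zero section of $\L$ that does not vanish along $E_P$. Applying the previous proposition with the section of $\L$ held fixed, I obtain canonical isometries of one-dimensional metrized $\R$-torsors
$$
\Num\big(W_{\ov \L_i,s_i}^{\ar}\big/W_{\ov \L,s}^{\ar}\big)\cong \Num\big(W_{\ov \L_i,s_i'}^{\ar}\big/W_{\ov \L,s}^{\ar}\big)\qquad (i=1,2),
$$
since in each case both sides have the same norm, namely the Arakelov intersection number $\deg_{\ar}((\ov\L_i/\ov\L)|_{\ov E_P})$ (and a one-dimensional metrized $\R$-torsor is determined by its norm, so the isometry is canonical). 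Applying $\Hom_\R(-,-)$ then yields a canonical isometry between the corresponding terms of the two inverse systems indexed by the same $(\ov\L, s)$.

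Next, I verify that these termwise isometries are compatible with the transition maps of the inverse limit, which are induced by further inclusions $\L'\hookrightarrow \L$ together with compatible choices of sections. This compatibility reduces to the naturality of the Arakelov-degree identification used in the proof of the previous proposition, and ultimately to the additivity of the arithmetic first Chern class $c_{1,\ar}$ in short exact sequences of metrized line bundles, combined with the fact that the Laurent-series parts of the numerations coincide on both sides. Once compatibility is confirmed, passage to the inverse limit produces the required canonical isometry
$$
\big[W_{\ov \L_1,s_1}^{\ar}\big|W_{\ov \L_2,s_2}^{\ar}\big]_2=\big[W_{\ov \L_1,s_1'}^{\ar}\big|W_{\ov \L_2,s_2'}^{\ar}\big]_2,
$$
and the \emph{in particular} assertion about well-definedness of $\big[W_{\ov \L_1}^{\ar}\big|W_{\ov \L_2}^{\ar}\big]_2$ follows immediately.

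The main obstacle is the compatibility verification in the second step. Because the canonical isometries of the previous proposition are pinned down only implicitly through matching Arakelov degrees, one must track carefully how the archimedean contribution ${\bf e}\big(\int_{X_\infty}-\log\|s\|\,d\mu\big)$ and the Green-function terms $G(P,Q)$ behave under refinement of $(\ov\L, s)$, and confirm that no residual factor emerges when combining the changes $s_i\to s_i'$ at the $\L_i$ level with further refinements $\L'\hookrightarrow \L$ at the inner level. The additivity of Arakelov intersection numbers in exact sequences from \cite{L} should resolve this, but the bookkeeping across archimedean and non-archimedean contributions is the most delicate part of the argument.
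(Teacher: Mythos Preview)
Your approach is essentially the same as the paper's: both arguments apply the second isometry of the preceding proposition termwise in the inverse limit and then pass to the limit. The paper's proof is a one-line displayed chain of isomorphisms invoking that isometry, with no explicit discussion of compatibility with transition maps; your concern about the archimedean bookkeeping is legitimate but is implicitly handled (as you suspect) by the fact that every term is pinned down by a single Arakelov degree, and these degrees are additive.
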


\begin{proof}
This is a direct consequence of the following calculation
$$
\begin{aligned}
\big[W_{\ov \L_1,s_1}^{\ar}&\big|W_{\ov \L_2,s_2}^{\ar}\big]_2\\
=&\lim_{\substack{\law\\ (\ov \L,s)\\ \L\hookrightarrow \L_1, \L_2}}
\Hom_\R\Big(\Num\big(W_{\ov \L_1,s_1}^{\ar}\big/W_{\ov \L,s}^{\ar}\big),
\Num\big(W_{\ov \L_1,s_1}^{\ar}\big/W_{\ov \L,s}^{\ar}\big)\Big)\\
\cong&\lim_{\substack{\law\\ (\ov \L',s')\\ \L'\hookrightarrow \L_1, \L_2}}
\Hom_\R\Big(\Num\big(W_{\ov \L_1,s_1'}^{\ar}\big/W_{\ov \L',s'}^{\ar}\big),
\Num\big(W_{\ov \L_2,s_2'}^{\ar}\big/W_{\ov \L',s'}^{\ar}\big)\Big)\\
&\hskip 2.7cm ~(\text{ by the second isometry of  Prop. 1 above})\\
=&\big[W_{\ov \L_1,s_1'}^{\ar}\big|W_{\ov \L_2,s_2'}^{\ar}\big]_2
\end{aligned}
$$
\end{proof}

\subsubsection{Case II}

Next, more generally, we deal with arbitrary non-zero rational sections $s$ of $\L$, which may vanish along $E_P$.

Let $f_0$ be a non-zero rational function such that $=s\cdot f_s^{-1}$ does not vanish along $E_P$. 
In particular, it makes sense for us to talk about $W^\ar_{\ov\L,s_0}$. 

To construct $W^\ar_{\ov\L,s}$, write $s=s_0\cdot u^{\nu_{E_P}(s)}\cdot f_0$ with $f_0$ a rational function and $s_0$ a section of
 the line bundle $\L_0:=\L(-\nu_{E_P}(s)\cdot E_P)$. There is a natural metric on $\L_0$ obtained as the tensor
 of $\ov \L$ and the $\nu_{E_P}(s)$-th tensor power of $\O_X(\ov E_P)$, namely, $\ov \L_1:=\ov \L(-\nu_{E_P}(s)\ov E_P)$. 
Our idea of constructing $W^\ar_{\ov\L,s}$ is to use the \lq existence' of a natural  decomposition 
$$
W^\ar_{\ov\L,s}=W^\ar_{\ov\L_0,s_0}\otimes W_{\O_X(\ov E_P),u}^{\otimes \nu_{E_P}(s)}\otimes W_{\ov O_X,f_0}.
$$
Since $s_0$ and $f_0$ do not vanish along $E_P$,  the spaces
$W^\ar_{\ov\L_0,s_0}$ and  $W_{\ov O_X,f_0}$ make sense. Therefore, to construct $W^\ar_{\ov\L,s}$ 
for a general non-zero section $s$, it suffices to define $W_{\O_X(\ov E_P),u}$ which we write simply as $W_{\ov E_P,u}$.
\vskip 0.20cm
With the discussion of Case I above in mind, in particular, the role of Arakelov intersection,
to construct $W_{\ov E_P,u}$, we first recall the Arakelov
adjunction formula. Let $\K_\pi$ be the canonical divisor of arithmetic surface 
$\pi:X\to Y:=\mathrm{Spac}\,\O_F$. Set $E_P=\mathrm{Spec} A$.
Then, see e.g., Cor. 5.5 at p.99 of \cite{L}, we have, globally, 
$$
\ov\K_\pi\cdot \ov E_P+\ov E_P^2={\bf d}_{E_P/Y}+{\bf d}_{\lambda}(\ov E_P).
$$
Here 

\begin{itemize}
\item [(a)] ${\bf d}_{\lambda}(\ov E_P)=\sum_{\sigma\in S_\infty}N_\sigma{\bf d}_{g,\sigma}(\ov E_P)$
with ${\bf d}_{g,\sigma}(\ov E_P):=\sum_{i\leq j}g(P_i,P_j)$ where $\{P_i\}$ are the collection of 
conjugating points on the Riemann surface $X_{\sigma}$ corresponding to the algebraic 
point $P$ of $X_F$; and
\item[(b)] ${\bf d}_{E_P/Y}=-\log\, (W_{E_P/Y}:\O_F)$, where $W_{E_P/R}$ is the 
dualizing module of $E_P$ over $Y$, defined by the fractional ideal of $F(P)$
$$
W_{E_P/Y}:=\{b\in F(P): \mathrm{Tr}(bA)\subset \O_F\}.
$$
For later use, we write $W_{E_P/Y}:=\prod_{x\in E_P}\frak m_{E_P,x}^{b_x}$.
\end{itemize}

Moreover, by reside theorem, i.e., Theorem 4.1 of Chapter IV in \cite{L}, we know that
the natural residue map induces a canonical isomorphism
$$
\mathrm{res}: \K_\pi( E_P)|_{E_P}\simeq W_{E_P/Y}^{\widetilde ~},
$$
where $W_{E_P/Y}^{\widetilde ~}$ denotes the sheaf on affine curve $E_P$ associated
to the $A$-module $W_{E_P/Y}$.

Now we are ready o define $W_{\ov E_P, u}^\ar$. As in the definition of $W_{\L,s}$ in Case I above,
$W_{\ov E_P, u}^\ar$ admits a natural decomposition:
$$
W_{\ov E_P,u}^\ar=W_{E_P,u}^{\mathrm{fin}}\times W_{\ov E_P,u}^\infty\cdot {\bf e}\Big(c_{\ov E_P}\Big).
$$
It is too naive to set
$$
W_{E_P,u}^{\mathrm{fin}}=\prod_{x\in E_P}\,u\cdot \O_{E_P,x}((u))
\times\prod_{Q\in X_F}u^{\nu_P(f)}\O_{X_F}|_P((u)).
$$
A bit thought in terms of Arakelov intersection, in particular, the adjunction formula and the associated residue theorem,
leads to the use of the formula
$$
W_{\ov E_P,u}^\ar\otimes W_{\ov \K_\pi, \omega_0}=W_{\ov K_\pi(\ov E_P)}^\ar.
$$
Here $\omega_0$ is a certain non-zero rational section of $\K_\pi$ 
which does not vanish along $E_P$.
Since, by the residue theorem recalled above, $\K_\pi(E_P)|_{E_P}=W_{E_P/Y}^{\widetilde{~}},$
accordingly, we define
$$
\begin{aligned}
W_{E_P,u}^{\mathrm{fin}}:=&\prod_{x\in E_P}u\cdot \frak m_{E_P,x}^{b_x-\nu_x(\omega_0|_{E_P})}((u)),\\
W_{E_P,u}^\infty:=&\prod_{Q\in X_F} u\cdot \O_{X_F} \big(Q\big)^{\otimes -\nu_Q(\omega_0|_{X_F})}\big|_P((u))\widehat\otimes_\Q \R,\\
c_{\ov E_P}:=&\int_{X_\infty}\log\|\omega_0\|d\mu-{\bf d}_\lambda(\ov E_P).
\end{aligned}
$$
Such defined, using the adjunction formula and the residue theorem again, we have
$$
\begin{aligned}
\num_0&\Big(\prod_{x\in E_P} \frak m_{E_P,x}^{b_x-\nu_x(\omega_0|_{E_P})}
\times \prod_{Q\in X_F} \cdot \O_{X_F} \big(Q\big)^{\otimes -\nu_Q(\omega_0|_{X_F})}\big|_P\widehat\otimes_\Q \R\\
&\hskip 5.0cm \cdot{\bf e}\big(\int_{X_\infty}\log\|\omega_0\|d\mu-{\bf d}_\lambda(\ov E_P)\big)\Big)\\
=&-{\bf d}_{E_P/Y}+\ov \K_\pi\cdot \ov E_P-{\bf d}_\lambda(\ov E_P)
=\ov \K_\pi\cdot \ov E_P-\big(\ov \K_\pi+\ov E_P\big)\cdot \ov E_P\\
=&\ov E_P \cdot \ov E_P.
\end{aligned}
$$
All in all, we are ready to introduce the following

\begin{defn} Let $\ov \L$ be a motorized line bundle on $X$ with $s$ a non-zero rational section. With respect to 
a decompose $s=s_0\cdot u^{\nu_{E_P}(s)}\cdot f_0$, we define  
$$
W_{\ov\L,s}^\ar:=\big(W_{\ov E_P, u}^\ar\big)^{\otimes \nu_P(f_s)}\otimes 
W_{\ov\L\otimes \ov E_P^{\otimes -\nu_P(f_s)},f_0s_0}^\ar.
$$
\end{defn}

Then the numeration of the coefficient part is equal to 
$$
\Big(\nu_P(f_s)\,\ov E_P+c_{1,\ar}\big(\ov\L\otimes \ov E_P^{\otimes -\nu_P(f_s)}\big)+c_{1,\ar}(\O_X)\Big)\cdot \ov E_P=
c_{1,\ar}(\ov\L)\cdot \ov E_P.
$$
Hence we have proved the following

\begin{thm} For any non-zero rational sections $s$ and $s'$ of $\L$, we have
$$
\Num\big(W_{\ov \L,s}^{\ar}\big)\cong \Num\big(W_{\ov \L,s'}^{\ar}\big).
$$
In particular, we may write $\Num\big(W_{\ov \L,s}^{\ar}\big)$ simply as 
$\Num\big(W_{\ov \L}^{\ar}\big).$
\end{thm}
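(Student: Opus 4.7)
The plan is to deduce the general isometry from Case I (Proposition 1) together with the Arakelov intersection identity for the coefficient part established immediately before the statement. The content of the theorem is really two things: (a) the Case II construction $W^\ar_{\ov\L,s}$ is well-defined, i.e.\ independent of the chosen decomposition $s=s_0\cdot u^{\nu_{E_P}(s)}\cdot f_0$; and (b) the resulting torsors for two different sections $s,s'$ are canonically isometric.

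First I would settle (a). Two decompositions of the same $s$ differ by replacing $(s_0,f_0)$ with $(s_0 g,\, f_0/g)$ for a rational function $g$ that neither vanishes nor has a pole along $E_P$, with $\nu_{E_P}(s)$ unchanged. Applying Proposition 1 once to the pair $(\ov\L_0,s_0,s_0g)$ and once to $(\ov\O_X,f_0,f_0/g)$ produces canonical isometries of the two tensor factors in Definition 2, and a direct check using the numeration identity $\num_0\big(W^\ar_{\ov\L_0,s_0}\big)=c_{1,\ar}(\ov\L_0)\cdot\ov E_P$ shows the two isometries compose to the identity on the tensor product, so $W^\ar_{\ov\L,s}$ is independent of decomposition.

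Next, for two sections $s,s'$ of $\L$ I would fix decompositions $s=s_0\cdot u^{\nu_{E_P}(s)}\cdot f_0$ and $s'=s_0'\cdot u^{\nu_{E_P}(s')}\cdot f_0'$, and build the comparison in two stages. The ratio $s/s'$ is a rational function, so $\nu_{E_P}(s)-\nu_{E_P}(s')$ is absorbed by regrouping a corresponding number of factors of $W^\ar_{\ov E_P,u}$ into the Case~I piece: this replaces $\ov\L_0$ by $\ov\L_0\otimes\ov E_P^{\otimes(\nu_{E_P}(s')-\nu_{E_P}(s))}$ and adjusts $s_0,f_0$ accordingly. After this regrouping, both sides are expressed as $W^\ar_{\ov\L'_0,\sigma}\otimes W^\ar_{\ov E_P,u}^{\otimes\nu_{E_P}(s')}\otimes W_{\ov\O_X,\varphi}$ for non-vanishing sections, and Proposition 1 supplies the canonical isometry between the Case~I factors.

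The main obstacle will be the bookkeeping in step (a): the adjunction-based construction of $W^\ar_{\ov E_P,u}$ uses the identity $\ov\K_\pi\cdot\ov E_P+\ov E_P^{\,2}={\bf d}_{E_P/Y}+{\bf d}_\lambda(\ov E_P)$, and one must check that changes in $(s_0,f_0)$ propagate through this formula so that the tensor factors recombine without residual discrepancy. Once that compatibility is in hand, step (b) is essentially just Proposition 1 applied fibrewise, and the conclusion $\Num(W^\ar_{\ov\L,s})\cong\Num(W^\ar_{\ov\L,s'})$ follows, with the common value $c_{1,\ar}(\ov\L)\cdot\ov E_P$ on the coefficient part confirming that the constructed isometry is indeed an isometry of metrized $\R$-torsors.
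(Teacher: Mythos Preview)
Your proposal is correct in spirit but takes a longer route than the paper. The paper's argument is essentially the single computation displayed just before the theorem: the canonical numeration $\num_0$ of the coefficient part of $W^\ar_{\ov\L,s}$ is
\[
\Big(\nu_P(f_s)\,\ov E_P+c_{1,\ar}\big(\ov\L\otimes \ov E_P^{\otimes -\nu_P(f_s)}\big)+c_{1,\ar}(\ov\O_X)\Big)\cdot \ov E_P
= c_{1,\ar}(\ov\L)\cdot \ov E_P,
\]
which visibly depends only on $\ov\L$ and $\ov E_P$, not on $s$ or on the chosen decomposition $s=s_0\,u^{\nu_{E_P}(s)}f_0$. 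Since the Laurent-series structure is identical for all $s$, this equality of canonical numerations \emph{is} the canonical isometry of metrized $\R$-torsors. The paper then simply states ``Hence we have proved the following'' and records the theorem.

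You already cite this identity, but you relegate it to a final ``confirmation'' after constructing the isometry by hand via regroupings of $W^\ar_{\ov E_P,u}$ factors and repeated appeals to Proposition~1. That machinery is not needed: once $\num_0$ is shown to be independent of $s$, the torsors are canonically identified and there is nothing further to build. In particular, the bookkeeping you flag as ``the main obstacle'' --- tracking how changes in $(s_0,f_0)$ interact with the adjunction-based definition of $W^\ar_{\ov E_P,u}$ --- is exactly what the direct intersection computation bypasses. Your concern (a) about well-definedness is legitimate, but it too is settled by the same computation: the value $c_{1,\ar}(\ov\L)\cdot\ov E_P$ is insensitive to the decomposition, so different decompositions of the same $s$ give isometric torsors automatically.
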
 

Consequently, we may introduce the following 

\begin{defn} For metrized line bundles $\ov\L_i$ with non-zero rational sections $s_i,\ s_i'$ of $\L_i$
(i=1,\,2),  we define a metrized $\R$-torsor $\big[W_{\ov \L_1,s_1}^{\ar}\big|W_{\ov \L_2,s_2}^{\ar}\big]_2$ by
$$
\big[W_{\ov \L_1,s_1}^{\ar}\big|W_{\ov \L_2,s_2}^{\ar}\big]_2
:=\lim_{\substack{\law\\ (\ov \L,s)\\ \L\hookrightarrow \L_1, \L_2}}
\Hom_\R\Big(\Num\big(W_{\ov \L_1,s_1}^{\ar}\big/W_{\ov \L,s}^{\ar}\big),
\Num\big(W_{\ov \L_1,s_2}^{\ar}\big/W_{\ov \L,s}^{\ar}\big)\Big).
$$
\end{defn}

With the same proof as that for Proposition 2 in \S2.2.1, we have proved the following

\begin{thm} With the same notation as above, we have a natural isometry of metrized $\R$-torsors
$$
\big[W_{\ov \L_1,s_1}^{\ar}\big|W_{\ov \L_2,s_2}^{\ar}\big]_2
=\big[W_{\ov \L_1,s_1'}^{\ar}\big|W_{\ov \L_2,s_2'}^{\ar}\big]_2.
$$
In particular, the space $\big[W_{\ov \L_1}^{\ar}\big|W_{\ov \L_2}^{\ar}\big]_2$ is well-defined.
\end{thm}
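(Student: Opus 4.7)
The plan is to mimic the argument given for Proposition~2 of \S2.2.1 verbatim, with the preceding theorem (independence of $\Num(W^\ar_{\ov\L,s})$ from $s$ in Case~II) playing the role that the second isometry of Proposition~1 played in Case~I. The only new content needed is a quotient refinement of the preceding theorem; everything else is a formal consequence of the inverse-limit description in Definition~4.

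First I would promote the preceding theorem to a quotient statement: for a metrized $\ov\L$ with $\L\hookrightarrow\L_1$, and for any non-zero rational sections $s,s'$ of $\L$ and $s_1,s_1'$ of $\L_1$ (none of which are required to be non-vanishing along $E_P$), there is a canonical isometry
\[
\Num\Big(W^\ar_{\ov\L_1,s_1}\big/W^\ar_{\ov\L,s}\Big)\;\cong\;\Num\Big(W^\ar_{\ov\L_1,s_1'}\big/W^\ar_{\ov\L,s'}\Big).
\]
Using the decomposition $W^\ar_{\ov\L,s}=(W^\ar_{\ov E_P,u})^{\otimes\nu_{E_P}(s)}\otimes W^\ar_{\ov\L(-\nu_{E_P}(s)\ov E_P),\,s_0 f_0}$ from Definition~3, and the analogous decomposition for $\ov\L_1$, the Laurent-series factors in $u$ cancel in the quotient and the numeration of both sides reduces, by the computation carried out right before Definition~4, to the Arakelov intersection number $\bigl(c_{1,\ar}(\ov\L_1)-c_{1,\ar}(\ov\L)\bigr)\cdot\ov E_P$. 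Since this value depends only on the metrized bundles and not on the chosen sections, both quotients carry the same numeration and the canonical isometry follows.

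Next, inserting this quotient refinement into the inverse limit defining $[W^\ar_{\ov\L_1,s_1}|W^\ar_{\ov\L_2,s_2}]_2$, the change of sections $(s_1,s_2)\rightsquigarrow(s_1',s_2')$ induces termwise isometries of the corresponding $\Hom_\R$-spaces which commute with the transition maps of the inverse system indexed by $\L\hookrightarrow\L_1,\L_2$, because the underlying numeration identities are natural in the auxiliary $(\ov\L,s)$. Passing to the inverse limit then yields the desired isometry of metrized $\R$-torsors, whence the well-definedness of $[W^\ar_{\ov\L_1}|W^\ar_{\ov\L_2}]_2$.

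The main obstacle is the naturality check: the decomposition $s=s_0\,u^{\nu_{E_P}(s)}\,f_0$ used in Definition~3 is not canonical, so one must verify that two different decompositions give data identified by a canonical isometry that is itself natural with respect to refinement of $(\ov\L,s)$. As in the construction of $W^\ar_{\ov E_P,u}$ above, this amounts to invoking the Arakelov adjunction formula together with the residue theorem; once this naturality is in place, the remaining steps are routine inverse-limit bookkeeping, and the proof proceeds exactly as for Proposition~2 in \S2.2.1.
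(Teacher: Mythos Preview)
Your proposal is correct and follows essentially the same approach as the paper: the paper's entire proof is the single sentence ``With the same proof as that for Proposition~2 in \S2.2.1,'' and you have spelled out precisely what that means---namely, that Theorem~3 (independence of $\Num(W^\ar_{\ov\L,s})$ from $s$ in Case~II) replaces the second isometry of Proposition~1, one passes to the quotient form, and then the inverse-limit argument from Proposition~2 goes through verbatim. Your remarks on the naturality of the decomposition $s=s_0\,u^{\nu_{E_P}(s)}f_0$ are more careful than the paper itself, but they do not constitute a different route.
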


\subsection{Numerations in terms of arithmetic adelic cohomology}
\subsubsection{Case I}
With the numeration in terms of intersection developed, next we introduce a numeration
in terms of one dimensional cohomology theory. As we will see later reciprocity laws for arithmetic
surfaces then can be proved using our refined arithmetic Riemann-Roch theorem \cite{SW} or better \cite{W}.

\begin{prop} Let $\ov \L$ be a metrized invertible sheaf on $X$ and $s$ is a non-zero rational section of $\L$. 
Assume that $s$ does not vanish along $E_P$, then the cohomology groups of the complex 
${\cal A}_{\ov E_P,*}^{\,\,\Ar}(\ov\L, s)$ is given by
$$
\begin{aligned}
H^0\big({\cal A}_{\ov E_P,*}^{\,\,\Ar}(\ov\L,s)\big)=&H_\ar^0\big(\ov E_P,\ov\L|_{\ov E_P}\big)((u)),\\
&\\
H^1\big({\cal A}_{\ov E_P,*}^{\,\,\Ar}(\ov\L,s)\big)=&H_\ar^1\big(\ov E_P,\ov\L|_{\ov E_P}\big)((u)).
\end{aligned}
$$
\end{prop}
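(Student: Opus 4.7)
The plan is to show that, because $s$ does not vanish along $E_P$, the complex ${\cal A}_{\ov E_P,*}^{\,\,\Ar}(\ov\L,s)$ decomposes along the $u$-adic filtration into Laurent series (in $u$) of the arithmetic adelic complex of $\ov E_P$ with coefficients in $\ov\L|_{\ov E_P}$. Then the statement will follow by passing to cohomology and invoking the adelic-cohomological comparison theorem for arithmetic curves from \cite{SW}.

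First, I would fix $u$ as a uniformizer of the formal neighborhood of $E_P$ in $X$, so that $\widehat{\cal O}_{X,E_P}\cong \cal O_{E_P}[[u]]$ locally/formally along $E_P$. Since $s$ does not vanish along $E_P$, it trivializes $\L$ on every infinitesimal neighborhood $\mathrm{Spec}\,\cal O_X/{\cal I}_{E_P}^m$ compatibly with the $u$-adic filtration, and so the standard identification of graded pieces gives
$$
\L\otimes {\cal I}_{E_P}^n/{\cal I}_{E_P}^m \;\cong\; \bigoplus_{i=n}^{m-1}u^i\cdot\L|_{E_P}
$$
as $\cal O_{E_P}$-modules. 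Moreover, since the Arakelov metric induced by $\ov\L$ on each graded piece coincides with the restriction metric on $\ov\L|_{\ov E_P}$ (the trivialization by $s$ being unit-norm up to the contributions of $\ov\L|_{\ov E_P}$ itself), this identification is compatible with the metric structure at infinity.

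Second, I would apply the arithmetic adelic functor $\Bbb A^\Ar_{X,*}$ of \cite{SW} term-by-term. Since each graded piece is the pushforward of a (metrized) coherent sheaf on $\ov E_P$, the adelic data of $\ov X$ along the graded piece reduces to the adelic data of $\ov E_P$; concretely,
$$
\Bbb A^\Ar_{X,*}\bigl(\L\otimes {\cal I}_{E_P}^n/{\cal I}_{E_P}^m\bigr) \;\cong\; \bigoplus_{i=n}^{m-1}u^i\cdot \Bbb A^\Ar_{\ov E_P}\bigl(\ov\L|_{\ov E_P}\bigr),
$$
where $\Bbb A^\Ar_{\ov E_P}$ is the arithmetic adelic complex of the arithmetic curve $\ov E_P$. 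Passing first to the inverse limit in $m$ (inserting all positive powers of $u$) and then to the direct limit in $n$ (allowing arbitrary negative powers), one obtains the Laurent series identification
$$
{\cal A}^\Ar_{\ov E_P,*}(\ov\L,s) \;\cong\; \Bbb A^\Ar_{\ov E_P}\bigl(\ov\L|_{\ov E_P}\bigr)\bigl((u)\bigr).
$$
Since the $((u))$ construction is exact on $\R$-vector spaces, it commutes with the formation of cohomology, and hence
$$
H^j\bigl({\cal A}^\Ar_{\ov E_P,*}(\ov\L,s)\bigr) \;\cong\; H^j\bigl(\Bbb A^\Ar_{\ov E_P}(\ov\L|_{\ov E_P})\bigr)((u)) \;\cong\; H^j_\ar\bigl(\ov E_P,\ov\L|_{\ov E_P}\bigr)((u)),
$$
the last step being the identification of arithmetic adelic cohomology of a metrized invertible sheaf on an arithmetic curve with its arithmetic sheaf cohomology, as established in \cite{SW}.

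The main obstacle is ensuring that the archimedean factor $B_P\otimes_{\widehat\O_{X,x}}\ov\L\,\widehat\otimes_\Q\R$ behaves well under the $u$-adic decomposition: one must verify that the completed tensor product with $\R$ preserves the $u$-graded structure and that the metric it inherits from $\ov\L$ restricts, on each graded piece, to the Arakelov metric of $\ov\L|_{\ov E_P}$. This is precisely where the hypothesis that $s$ does not vanish along $E_P$ enters — otherwise the trivialization induced by $s$ would shift indices in $u$ and mix nontrivially with the metric, and one would be forced into the renormalization of Case II (Definition 4) instead of the direct identification used here.
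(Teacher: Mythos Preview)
Your approach is essentially that of the paper: both reduce the computation to Laurent series in $u$ of the one-dimensional arithmetic adelic complex of $\ov E_P$ with coefficients in $\ov\L|_{\ov E_P}$, using that $s$ does not vanish along $E_P$ to split the $u$-adic filtration and then invoking the comparison theorem of \cite{SW}. The only difference is one of packaging: you identify the full complex with $\Bbb A^\Ar_{\ov E_P}(\ov\L|_{\ov E_P})((u))$ and then appeal to exactness of $((u))$, whereas the paper takes $H^i_\ar$ at each finite thickening ${\cal I}_{E_P}^n/{\cal I}_{E_P}^m$ first and then passes to the ind-pro limit, adding for $H^1$ the explicit remark that the transition maps $\varphi_{l,n;m}$ are injective (because $E_P$ is affine, so Parshin--Beilinson $H^0$ is exact) --- which is exactly what your exactness-of-$((u))$ step encodes.
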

\begin{proof} This is a direct consequence of arithmetic adelic cohomology theory developed in \cite{SW}.
Indeed, for the invertible sheaf $\L$,  by the definition of the complex ${\cal A}_{\ov E_P,*}^{\,\,\Ar}(\ov\L,s)$, its
cohomology is given by
$$
\lim_{\substack{\raw\\ n}}\lim_{\substack{\law\\ m:\, m\geq n}}
H_\ar^i(\ov E_P,\L\otimes {\cal I}_{E_P}^n\big/{\cal I}_{E_P}^m),\qquad i=0,1.
$$
This then already implies 
$H^0\big({\cal A}_{\ov E_P,*}^{\,\,\Ar}(\L)\big)=H_\ar^0\big(\ov E_P,\L|_{\ov E_P}\big)((u))$.
To prove $H^1\big({\cal A}_{\ov E_P,*}^{\,\,\Ar}(\L)\big)=H_\ar^1\big(\ov E_P,\L|_{\ov E_P}\big)((u)),$ 
we use the fact that,  for horizontal curve $\ov E_P$, the maps 
$$
\varphi_{l,n;m}: H_\ar^1(\ov E_P,\L\otimes {\cal I}_{E_P}^n\big/{\cal I}_{E_P}^m)
\raw 
H_\ar^1(\ov E_P,\L\otimes {\cal I}_{E_P}^l\big/{\cal I}_{E_P}^m)
$$ 
are all injective, since
Parshin-Beilinson's $H^0(E_P,\cdot)$ is exact. (In fact, since $E_P$ is affine,
Parshin-Beilinson's $H^1(E_P,\cdot)$ is always 0.)
\end{proof}

Accordingly, we obtain two metrized $\R$-torsors
$$
\Num\big(H^0\big({\cal A}_{\ov E_P,*}^{\,\,\Ar}(\ov \L,s)\big)\big)\qqan 
\Num\big(H^1\big({\cal A}_{\ov E_P,*}^{\,\,\Ar}(\ov \L,s)\big)\big).
$$

\begin{defn} For metrized line bundle $\ov \L$ on $X$ and a non-zero rational section $s$ of $\L$
which does not vanish along $E_P$, we define a metrized $\R$-torsor 
$\Num\big({\cal A}_{\ov E_P,*}^{\,\,\Ar}(\ov \L,s)\big)$ by setting
$$
\begin{aligned}
\Num\big(&{\cal A}_{\ov E_P,*}^{\,\,\Ar}(\ov \L,s)\big)\\
:=&\Hom_\R\Big(\Num\big(H^1\big({\cal A}_{\ov E_P,*}^{\,\,\Ar}(\ov \L,s)\big)\big), 
\Num\big(H^0\big({\cal A}_{\ov E_P,*}^{\,\,\Ar}(\ov \L,s)\big)\big)\Big).
\end{aligned}
$$
\end{defn}

To go further, we need the following

\begin{defn} We introduce a canonical numeration $\num_0$ by
\begin{itemize}
\item [(1)] $\num_0\big(H_\ar^0\big(\ov E_P,\ov\L|_{\ov E_P}\big)\big):=h_\ar^0\big(\ov E_P,\ov \L|_{\ov E_P}\big);$
\item [(2)] $\num_0\big(H_\ar^1\big(\ov E_P,\ov \L|_{\ov E_P}\big)\big):=h_\ar^1\big(\ov E_P,\ov \L|_{\ov E_P}\big);$
\item [(3)] $\num_0\big(A[[u]]\big/u^n A[[u]]:=n\cdot \num_0(A)$ for any numerable locally compact space $A$
\end{itemize}
\end{defn}

With all this, we are now ready to prove the following

\begin{thm} Let  $\ov \L_i$ be metrized line bundle on $X$ and $s_i$ be a non-zero rational section of $\L_i$
which does not vanish along $E_P$\ (i=1,2). 
There is a canonical isometry of metrized $\R$-torsors
$$
\big[W_{\ov \L_1,s_1}^{\ar}\big|W_{\ov \L_2,s_2}^{\ar}\big]_2\cong
\Hom_\R\Big(\Num\big({\cal A}_{\ov E_P,*}^{\,\,\Ar}(\ov \L_1,s_1)\big),
\Num\big({\cal A}_{\ov E_P,*}^{\,\,\Ar}(\ov \L_2,s_2)\big)\Big).
$$
\end{thm}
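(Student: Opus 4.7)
The plan is to compare, for each common sub-bundle $(\ov\L,s)$ with $\L\hookrightarrow \L_1,\L_2$, the $\R$-torsor inside the inverse limit defining the left-hand side with the right-hand side, and to show that the resulting isometry is canonical and independent of $(\ov\L,s)$; the statement then follows by passing to the limit via Definition 4. The key observation is that in the two-term complex
$${\cal A}_{\ov E_P,*}^{\,\,\Ar}(\ov\L,s):\ \widehat {k(X)}_{E_P}\times W_{\ov\L,s}^{\ar}\raw \Bbb A_{\ov E_P}^{\,\,\Ar},$$
the first and last terms do not depend on $(\ov\L,s)$; only the $W$-factor varies. Thus the inclusion $(\ov\L,s)\hookrightarrow(\ov\L_i,s_i)$ induces an honest inclusion of two-term complexes whose quotient is the discrete $\R$-space $W_{\ov\L_i,s_i}^{\ar}/W_{\ov\L,s}^{\ar}$ concentrated in degree zero. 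Together with Proposition 3, the associated long exact sequence of cohomology takes the shape
$$0\raw H^0({\cal A}(\ov\L,s))\raw H^0({\cal A}(\ov\L_i,s_i))\raw W_{\ov\L_i,s_i}^{\ar}/W_{\ov\L,s}^{\ar}\raw H^1({\cal A}(\ov\L,s))\raw H^1({\cal A}(\ov\L_i,s_i))\raw 0.$$

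Next I would upgrade this exact sequence to an isometry of metrized $\R$-torsors
$$\Num(W_{\ov\L_i,s_i}^{\ar}/W_{\ov\L,s}^{\ar})\cong \Num(H^0({\cal A}(\ov\L_i,s_i)))\otimes\Num(H^0({\cal A}(\ov\L,s)))^{-1}\otimes\Num(H^1({\cal A}(\ov\L,s)))\otimes\Num(H^1({\cal A}(\ov\L_i,s_i)))^{-1}.$$
Applying $\Hom_\R$ to these identities for $i=1,2$ and noticing that the factors depending only on $\L$ cancel against one another, we would obtain a canonical isometry
$$\Hom_\R\big(\Num(W_{\ov\L_1,s_1}^{\ar}/W_{\ov\L,s}^{\ar}),\Num(W_{\ov\L_2,s_2}^{\ar}/W_{\ov\L,s}^{\ar})\big)\cong \Hom_\R\big(\Num({\cal A}(\ov\L_1,s_1)),\Num({\cal A}(\ov\L_2,s_2))\big)$$
which is manifestly independent of $(\ov\L,s)$, hence extends to the inverse limit to deliver the theorem.

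The main obstacle is upgrading the long exact sequence from a bare identity of degrees to an isometry of metrized $\R$-torsors. At the level of $\num_0$, both sides equal $c_{1,\ar}(\ov\L_i)\cdot\ov E_P-c_{1,\ar}(\ov\L)\cdot\ov E_P$: the left side by the intersection-theoretic calculation of Case I and Case II combined with Proposition 1 and Theorem 1, the right side by the arithmetic Riemann-Roch relation $\chi_{\ar}(\ov\L|_{\ov E_P})=c_{1,\ar}(\ov\L|_{\ov E_P})+\chi_{\ar}(\O_{\ov E_P})$ on the arithmetic curve $\ov E_P$ as codified in Definition 6. Promoting this numerical agreement to an actual isometry requires carefully tracking the metric data carried by the Laurent-series factor $((u))$ in Proposition 3 together with the adjunction/residue contribution encoded in $W_{\ov E_P,u}^{\ar}$ in Case II, and invoking the refined arithmetic Riemann-Roch theorem of \cite{SW}, \cite{W} in its sharp metric form.
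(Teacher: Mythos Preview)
Your approach is correct in spirit and lands on the same computation as the paper, but it is more circuitous than necessary. The paper's proof is considerably more direct: since both sides are metrized $\R$-torsors equipped with the distinguished element $\num_0$, a canonical isometry is obtained simply by matching these elements. The Laurent-series factors behave identically on the two sides, so one only has to compare the coefficient parts. On the left, the discussion leading to Proposition~1 gives $\num_0 = \deg_{\ar}\big((\ov\L_1-\ov\L_2)|_{\ov E_P}\big)$ via Arakelov intersection. On the right, Definition~7 and Proposition~5 give $\num_0$ of $\Num\big({\cal A}_{\ov E_P,*}^{\,\,\Ar}(\ov\L_i,s_i)\big)$ as $-\chi_{\ar}(\ov E_P,\ov\L_i|_{\ov E_P}) = -\deg_{\ar}(\ov\L_i|_{\ov E_P}) - \tfrac{1}{2}\log|\Delta_{k(P)}|$; subtracting, the discriminant terms cancel and one again obtains $\deg_{\ar}\big((\ov\L_1-\ov\L_2)|_{\ov E_P}\big)$. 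That is the entire argument.

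Your long exact sequence associated to the inclusion of two-term complexes is a valid and pleasant structural observation, and the cancellation of the $(\ov\L,s)$-terms after applying $\Hom_\R$ is correct. However, you yourself identify the crux: promoting the exact sequence to an isometry of metrized torsors. You then resolve this ``obstacle'' by falling back on exactly the $\num_0$ comparison the paper uses --- at which point the exact sequence has bought nothing, since matching $\num_0$ already \emph{is} the isometry. In short, the exact-sequence scaffolding is an unnecessary detour; the paper simply computes both $\num_0$'s and observes they coincide by arithmetic Riemann--Roch on the curve $\ov E_P$.
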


\begin{proof}
It suffices to identify special numerations in both sides canonically. 
Note that, for them, the Laurent series parts work in the same way, it suffices to treat their coefficient parts.
From Prop ???, we know that $\num_0$ for the left hand side gives
$$
\deg_{\ar}\big((\ov\L_1-\ov\L_2)|_{\ov E_P}\big).\eqno(*)
$$ 
To complete the proof, it suffices to notice that
$\num_0$ for the coefficient part of $\Num\big({\cal A}_{\ov E_P,*}^{\,\,\Ar}(\ov \L_1,s_1)\big)$ equals
$$
\begin{aligned}
&h_\ar^1\big(\ov E_P,\ov \L_1|_{\ov E_P}\big)-h_\ar^0\big(\ov E_P,\L_1|_{\ov E_P}\big)\\
=&-\chi_\ar(\ov E_P, \ov \L_1|_{\ov E_P})
=-\deg_{\ar}\big((\ov\L_1)|_{\ov E_P}\big)-\frac{1}{2}\log|\Delta_{k(P)}|.
\end{aligned}
$$ 
Similarly, 
$\num_0$ for the coefficient part of $\Num\big({\cal A}_{\ov E_P,*}^{\,\,\Ar}(\ov \L_2,s_2)\big)$ equals
$$
\begin{aligned}
&h_\ar^1\big(\ov E_P,\ov \L_2|_{\ov E_P}\big)-h_\ar^0\big(\ov E_P,\L_2|_{\ov E_P}\big)\\
=&-\chi_\ar(\ov E_P, \ov \L_2|_{\ov E_P})
=-\deg_{\ar}\big((\ov\L_2)|_{\ov E_P}\big)-\frac{1}{2}\log|\Delta_{k(P)}|.
\end{aligned}
$$ 
Therefore, the $\num_0$ for the coefficient part of the right hand gives
$$
\begin{aligned}
-\Big(-\deg_{\ar}\big(&(\ov\L_1)|_{\ov E_P}\big)-\frac{1}{2}\log|\Delta_{k(P)}|\Big)
+\Big(-\deg_{\ar}\big((\ov\L_2)|_{\ov E_P}\big)-\frac{1}{2}\log|\Delta_{k(P)}|\Big)\\[0.8em]
=&\deg_{\ar}\big((\ov\L_1-\ov\L_2)|_{\ov E_P}\big).
\end{aligned}\eqno(**)
$$ 
This coincides with the canonical numeration $\num_0$ for the coefficient of the space
$\big[W_{\ov \L_1,s_1}^{\ar}\big|W_{\ov \L_2,s_2}^{\ar}\big]_2$ obtained using Arakelov intersection
given in $(*)$.
\end{proof}

\subsubsection{Case II}
With above discussion on special rational sections, next we treat arbitrary non-zero rational sections.
Let then $s$ be a non-zero rational section of $\L$ and write $s=s_0u^{\nu_P(f_s)}f_0$ with 
$s_0$ a rational section of $\L(-\nu_P(f_s)E_P)$, $f_0$ a rational function on $X$ which does not vanish
along $E_P$. Then, 
by definition, 
$$
W_{\ov\L,s}^\ar=\big(W_{\ov E_P, u}^\ar\big)^{\otimes \nu_P(f_s)}\otimes 
W_{\ov\L\otimes \ov E_P^{\otimes -\nu_P(f_s)},s_0}^\ar
\otimes W_{\ov\O_X, f_0}^\ar.
$$
And to calculate $H^i_\ar\big({\cal A}_{\ov E_P,*}^{\,\,\Ar}(\ov \L,s)\big)\big), i=0,1$, we use the complex
$$
k(X)^*\times W_{\ov\L,s}^\ar\buildrel \varphi\over\raw \Bbb A_{\ov E_P}^{\,\,\Ar}.
$$
Here, if we write 
$\mathrm{div}(s_0f_0|_{E_P})=\sum_xn_{0,x}[x],\  \mathrm{div}(s_0f_0|_{X_F})=\sum_Qm_{0,Q}[Q]$,
by definition,
$$
\begin{aligned}
W_{\ov\L,s}^\ar=&\prod_{x\in E_P} u^{\nu_P(s)}\cdot 
\frak m_{E_P,x}^{\nu_P(s)(b_x-\nu_x(\omega_0|_{E_P}))-n_{0,x}}((u))\\
&\times\prod_{Q\in X_F}u^{\nu_P(s)}\cdot 
\O_{X_F}\big(Q\big)^{-\nu_P(s)\nu_Q(\omega_0|_{X_F})+m_{0,Q}}\big|_P((u))\\
&\cdot{\bf e}\Big(\int_{X_\infty}\big(\nu_P(s)\log\|\omega_0\|-\log\|s_0f_0\| \big)d\mu
+\nu_P(s){\bf d}_{\lambda}(\ov E_P)\Big).
\end{aligned}
$$
Consequently, $H^0_\ar\big({\cal A}_{\ov E_P,*}^{\,\,\Ar}(\ov \L,s)\big)\big)=\mathrm{Ker}(\varphi)$ 
which can be described as
$$\begin{aligned}
H^0_\ar\big({\cal A}_{\ov E_P,*}^{\,\,\Ar}&(\ov \L,s)\big)\big)\\
=& u^{\nu_P(s)}\prod_{x\in E_P}  \frak m_{E_P,x}^{\nu_P(s)(b_x-\nu_x(\omega_0|_{E_P}))-n_{0,x}}\\
&\times\prod_{Q\in X_F} G\big(Q,P\big)^{-\nu_P(s)\nu_Q(\omega_0|_{X_F})+m_{0,Q}}\\
&\cdot{\bf e}\Big(\int_{X_\infty}\big(\nu_P(s)\log\|\omega_0\|-\log\|s_0f_0\| \big)d\mu
+\nu_P(s){\bf d}_{\lambda}(\ov E_P)\Big)((u)),
\end{aligned}
$$
and the corresponding quotient space 
$H^1_\ar\big({\cal A}_{\ov E_P,*}^{\,\,\Ar}(\ov \L,s)\big)\big)=\mathrm{Coker}(\varphi)$,
whose explicit description we leave to the reader. (See e.g. \S1.2.4 of \cite{SW}.)

Since $H^i_\ar\big({\cal A}_{\ov E_P,*}^{\,\,\Ar}(\ov \L,s)\big)\big),\ i=0,1$ are ind-pro topology spaces
induced from Laurent series with coefficients numerable locally compact spaces,
consequently, we obtain two metrized $\R$-torsors
$$
\Num\big(H^0\big({\cal A}_{\ov E_P,*}^{\,\,\Ar}(\ov \L,s)\big)\big)\qqan 
\Num\big(H^1\big({\cal A}_{\ov E_P,*}^{\,\,\Ar}(\ov \L,s)\big)\big).
$$
As it stands, unlike in Def. 5, it is not easy to describe the numerations 
for the coefficient parts of them separately under $\num_0$. However,
their difference can be treated well. For this we introduce the following

\begin{defn} For metrized line bundle $\ov \L$ on $X$ and a non-zero rational section $s$ of $\L$, 
we define a metrized $\R$-torsor 
$\Num\big({\cal A}_{\ov E_P,*}^{\,\,\Ar}(\ov \L,s)\big)$ by setting
$$
\begin{aligned}
\Num\big(&{\cal A}_{\ov E_P,*}^{\,\,\Ar}(\ov \L,s)\big)\\
:=&\Hom_\R\Big(\Num\big(H^1\big({\cal A}_{\ov E_P,*}^{\,\,\Ar}(\ov \L,s)\big)\big), 
\Num\big(H^0\big({\cal A}_{\ov E_P,*}^{\,\,\Ar}(\ov \L,s)\big)\big)\Big).
\end{aligned}
$$
\end{defn}

\begin{lem} With the same notation as above, the canonical numeration $\num_0$ 
for the coefficient part of $\Num\big({\cal A}_{\ov E_P,*}^{\,\,\Ar}(\ov \L,s)\big)$
is simply $-\chi_\ar\big(\ov E_P, \ov\L|_{\ov E_P}).$
\end{lem}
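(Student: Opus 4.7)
The plan is to identify the coefficient parts of $H^0$ and $H^1$ described above with the standard arithmetic cohomology of $\ov\L|_{\ov E_P}$ on $\ov E_P$, and then read off the assertion from arithmetic Riemann--Roch on $\ov E_P$ and the Hom-torsor convention already used in Case I.

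First, the common Laurent series factor $((u))$ contributes identically to $\num_0$ of $\Num(H^0)$ and of $\Num(H^1)$ by rule (3) of Definition~3, and hence cancels inside the Hom-torsor $\Hom_\R(\Num(H^1),\Num(H^0))$. So it suffices to analyze the coefficient parts.

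Second, using the decomposition $s=s_0\cdot u^{\nu_{E_P}(s)}\cdot f_0$ that underlies Definition~4, I would identify the explicit coefficient expression for $H^0$ with a representative of $H_\ar^0(\ov E_P,\ov\L|_{\ov E_P})$ in two stages: the $(s_0,f_0)$-piece (whose data do not involve vanishing along $E_P$) is handled directly by Case I (Proposition~1 of \S2.3.1), while the $u^{\nu_{E_P}(s)}$-piece is absorbed into $\ov\L|_{\ov E_P}$ via the $\omega_0$-corrections built into $W^\ar_{\ov E_P,u}$. The latter absorption is exactly the content of the intersection-theoretic calculation preceding Definition~4, which rests on the Arakelov adjunction formula together with the residue isomorphism $\mathrm{res}\colon\K_\pi(E_P)|_{E_P}\cong W_{E_P/Y}^{\widetilde{~}}$. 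A parallel argument applied to the cokernel description identifies the coefficient of $H^1$ with $H_\ar^1(\ov E_P,\ov\L|_{\ov E_P})$.

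From Definition~3(1)--(2) the coefficient numerations are then $\num_0(\Num(H^i))=h_\ar^i(\ov E_P,\ov\L|_{\ov E_P})$ for $i=0,1$, so following the Hom-torsor convention adopted in the proof of Theorem~2, the coefficient $\num_0$ of $\Num({\cal A}_{\ov E_P,*}^{\,\,\Ar}(\ov\L,s))$ equals
$$
h_\ar^1\bigl(\ov E_P,\ov\L|_{\ov E_P}\bigr)-h_\ar^0\bigl(\ov E_P,\ov\L|_{\ov E_P}\bigr)=-\chi_\ar\bigl(\ov E_P,\ov\L|_{\ov E_P}\bigr),
$$
which is the claim. The hardest step is the identification in the second paragraph: the cokernel description of $H^1$ was left to the reader in the text and must be worked out, and one then needs to verify that the $\omega_0$-corrections align symmetrically on the kernel and cokernel sides so that \emph{both} identifications go through. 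This amounts to repeating the intersection-theoretic calculation for $W^\ar_{\ov E_P,u}$ on the dual side, using the Arakelov adjunction formula and the residue theorem once more.
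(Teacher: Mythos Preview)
Your approach diverges from the paper's in a way that leaves a genuine gap. The paper does \emph{not} identify the coefficient parts of $H^0$ and $H^1$ separately with $H_\ar^i(\ov E_P,\ov\L|_{\ov E_P})$; in fact the text immediately preceding Definition~8 says explicitly that ``it is not easy to describe the numerations for the coefficient parts of them separately under $\num_0$.'' What the construction produces is a metrized line bundle $M$ on $\ov E_P$ (the explicit product of $\frak m_{E_P,x}^{\cdots}$, Green-function factors, and the ${\bf e}(\cdots)$ term), and one knows only that $\num_0$ of the Hom-torsor equals $-\chi_\ar(\ov E_P,M)$. The paper then computes $\deg_\ar(M)$ directly via intersection theory---using the residue theorem to replace the $b_x$ by data of $\K_\pi(E_P)|_{E_P}$ and the adjunction formula to collapse the $\nu_P(s)$-terms---and obtains $\deg_\ar(M)=c_{1,\ar}(\ov\L)\cdot\ov E_P$. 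Only at the very last line does arithmetic Riemann--Roch on $\ov E_P$ convert this degree into $-\chi_\ar(\ov E_P,\ov\L|_{\ov E_P})$.

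The step you call ``absorption via the $\omega_0$-corrections'' and justify by the calculation preceding Definition~4 is precisely a \emph{degree} statement: that calculation shows $\num_0$ of the coefficient of $W^\ar_{\ov E_P,u}$ equals $\ov E_P\cdot\ov E_P$, not that the underlying metrized line bundle is isometric to $\O_X(\ov E_P)|_{\ov E_P}$. Two metrized line bundles on $\ov E_P$ with the same arithmetic degree need not be isomorphic, so $h_\ar^i(M)$ and $h_\ar^i(\ov\L|_{\ov E_P})$ need not agree individually---only their difference $\chi_\ar$ does, and that requires Riemann--Roch. Your argument never invokes Riemann--Roch (you only use the tautology $h^1-h^0=-\chi$), so the passage from $M$ to $\ov\L|_{\ov E_P}$ is unjustified. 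To repair your route you would have to prove that the residue isomorphism $\K_\pi(E_P)|_{E_P}\cong W_{E_P/Y}^{\widetilde{~}}$ is an isometry in the Arakelov sense, which the paper neither states nor needs.
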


\begin{proof} Indeed, by definition, the quantity we seek is equal to the negative of
$$\begin{aligned}\chi_\ar\Big(\ov E_P,& \prod_{x\in E_P}  \frak m_{E_P,x}^{\nu_P(s)(b_x-\nu_x(\omega_0|_{E_P}))-n_{0,x}}\\
&\times\prod_{Q\in X_F} G\big(Q,P\big)^{-\nu_P(s)\nu_Q(\omega_0|_{X_F})+m_{0,Q}}\\
&\cdot{\bf e}\Big(\int_{X_\infty}\big(\nu_P(s)\log\|\omega_0\|-\log\|s_0f_0\| \big)d\mu+\nu_P(s){\bf d}_{\lambda}(\ov E_P)
\Big)\Big).
\end{aligned}
$$
That is,
$$
\begin{aligned}
-&\Big(\nu_P(s){\bf d}_{E_P/Y}-\nu_P(s)\cdot c_{1,\ar}(\ov\K_\pi)\cdot \ov E_P+\nu_P(s){\bf d}_\lambda{\ov E_P}\\
&\hskip 4.50cm +\big(c_{1,\ar}(\ov\L)-\nu_P\ov E_P\big)\cdot \ov E_P\Big)+\frac{1}{2}\log|\Delta_{k(P)}|\\
&\hskip 4.9cm(\text{by the residue theorem and definition})\\
=&-\Big(\nu_P(s)\big(c_{1,\ar}(\ov\K_\pi)+\ov E_P\big)\cdot \ov E_P-\nu_P(s)\cdot c_{1,\ar}(\ov\K_\pi)\cdot \ov E_P\\
&\hskip 4.50cm +\big(c_{1,\ar}(\ov\L)-\nu_P\ov E_P\big)\cdot \ov E_P\Big)+\frac{1}{2}\log|\Delta_{k(P)}|\\
&\hskip 6.9cm(\text{by the adjunction formula})\\
=&-c_{1,\ar}(\ov\L)\cdot \ov E_P+\frac{1}{2}\log|\Delta_{k(P)}|\\
=&-\chi_\ar\big(\ov E_P, \ov\L|_{\ov E_P})\\
&\hskip 6.2cm(\text{by the Riemann-Roch formula}).\\
\end{aligned}
$$

\end{proof}

With all this, we are now ready to prove the following

\begin{thm} Let  $\ov \L_i$ be metrized line bundle on $X$ and $s_i$ be a non-zero rational section of $\L_i$
\ (i=1,2). There is a canonical isometry of metrized $\R$-torsors
$$
\big[W_{\ov \L_1,s_1}^{\ar}\big|W_{\ov \L_2,s_2}^{\ar}\big]_2\cong
\Hom_\R\Big(\Num\big({\cal A}_{\ov E_P,*}^{\,\,\Ar}(\ov \L_1,s_1)\big),
\Num\big({\cal A}_{\ov E_P,*}^{\,\,\Ar}(\ov \L_2,s_2)\big)\Big).
$$
\end{thm}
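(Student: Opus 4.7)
The proof plan is to reduce the statement to a canonical identification of coefficient parts, paralleling the Case~I isometry established just above — the difference being that we now feed in the general-section definitions of $W^\ar_{\ov\L,s}$ and $\Num({\cal A}^\Ar_{\ov E_P,*}(\ov\L,s))$ from Case~II. Both sides are built as inverse limits over pairs $(\ov\L,s)$ with $\L\hookrightarrow\L_1,\L_2$, and on each side the Laurent-series $((u))$-factor matches tautologically; hence it suffices to produce the canonical identification of the coefficient-part $\R$-torsors, and as a numerical shadow to check $\num_0$-agreement.

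On the left-hand side I apply the Case~II tensor decomposition
$$
W^\ar_{\ov\L,s} \= (W^\ar_{\ov E_P,u})^{\otimes\nu_{E_P}(s)}\otimes W^\ar_{\ov\L\otimes\ov E_P^{\otimes -\nu_{E_P}(s)},\,s_0f_0}
$$
together with the Arakelov-intersection calculation preceding Definition~4, which gives $\num_0$ of the coefficient of $W^\ar_{\ov E_P,u}$ equal to $\ov E_P\cdot\ov E_P$. Summing contributions yields $\num_0\big(\text{coeff part of }W^\ar_{\ov\L,s}\big) \= c_{1,\ar}(\ov\L)\cdot\ov E_P$, independent of $s$. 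Substituting into the definition of $[\cdot|\cdot]_2$ produces
$$
\num_0\big(\big[W^\ar_{\ov\L_1,s_1}\big|W^\ar_{\ov\L_2,s_2}\big]_2\big) \= \deg_\ar\big((\ov\L_1-\ov\L_2)|_{\ov E_P}\big),
$$
exactly as in the Case~I formula $(*)$.

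On the right-hand side, Lemma~1 supplies $\num_0\big(\Num({\cal A}^\Ar_{\ov E_P,*}(\ov\L_i,s_i))\big) = -\chi_\ar(\ov E_P,\ov\L_i|_{\ov E_P})$; combining with the outer $\Hom_\R$ and arithmetic Riemann-Roch for the horizontal curve $\ov E_P$ recovers the same quantity $\deg_\ar\big((\ov\L_1-\ov\L_2)|_{\ov E_P}\big)$, since the universal constant $-\frac{1}{2}\log|\Delta_{k(P)}|$ cancels in the difference — identically to $(**)$ of the Case~I proof. The two numerations agree.

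The main obstacle, and the main content, is to upgrade this numerical agreement to the canonical isometry of metrized $\R$-torsors. My plan is to imitate the proof of Proposition~2: for each auxiliary $(\ov\L,s)$ in the directed system, identify $\Num(W^\ar_{\ov\L_i,s_i}/W^\ar_{\ov\L,s})$ with the corresponding quotient $\R$-torsor built from $H^0$ and $H^1$ of the arithmetic adelic complex, and then verify that this identification is compatible with the Case~II tensor decomposition. The latter compatibility reduces to the interplay between the Arakelov adjunction formula, the residue isomorphism $\K_\pi(E_P)|_{E_P}\cong W_{E_P/Y}^{\widetilde{\ }}$, and the specific definition of $W^\ar_{\ov E_P,u}$ recorded before Definition~4 — all of which were engineered precisely so that the tensor factor $(W^\ar_{\ov E_P,u})^{\otimes \nu_{E_P}(s)}$ contributes $\ov E_P\cdot\ov E_P$ to both sides in mutually compatible ways. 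Passing to the inverse limit and invoking the well-definedness of $[\cdot|\cdot]_2$ for general sections (Theorem~4) then delivers the desired canonical isometry, and independence from the choice of $s_1,s_2$ is automatic.
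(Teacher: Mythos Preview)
Your proposal is correct and follows essentially the same route as the paper: reduce to coefficient parts, compute $\num_0$ on the left via the Case~II decomposition and Arakelov intersection (the paper cites Prop.~3 and Thm.~4 for this), and on the right via Lemma~7 plus arithmetic Riemann--Roch, obtaining $\deg_\ar\big((\ov\L_1-\ov\L_2)|_{\ov E_P}\big)$ on both sides.

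One remark: your final paragraph, in which you treat the ``upgrade'' from $\num_0$-agreement to a canonical isometry as the main obstacle, is unnecessary in the paper's framework. A metrized $\R$-torsor here is determined by its canonical numeration $\num_0$, so the canonical isometry is \emph{by definition} the one matching the distinguished elements; the paper accordingly begins its proof with ``It suffices to identify special numerations in both sides canonically'' and stops once the two $\num_0$ values coincide. Your proposed inverse-limit compatibility check is not wrong, but it re-derives what is already built into the formalism.
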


\begin{proof}
It suffices to identify special numerations in both sides canonically. 
Note that, for both sides, the Laurent series parts work in the same way, it suffices to treat their coefficient parts.
From our proof of Prop 3. and Thm. 4, we know that $\num_0$ for the left hand side gives
$$
\deg_{\ar}\big((\ov\L_1-\ov\L_2)|_{\ov E_P}\big).\eqno(*)
$$ 
To complete the proof, it suffices to notice that
$\num_0$ for the coefficient part of $\Num\big({\cal A}_{\ov E_P,*}^{\,\,\Ar}(\ov \L_1,s_1)\big)$ gives
$-\chi_\ar(\ov E_P, \ov \L_1|_{\ov E_P}).$ 
Indeed, similarly, 
$\num_0$ for the coefficient part of $\Num\big({\cal A}_{\ov E_P,*}^{\,\,\Ar}(\ov \L_2,s_2)\big)$ gives
$-\chi_\ar(\ov E_P, \ov \L_2|_{\ov E_P}).$ 
Therefore, the $\num_0$ for the coefficient part of the right hand gives
$$
\begin{aligned}
-\Big(-\deg_{\ar}\big(&(\ov\L_1)|_{\ov E_P}\big)+\frac{1}{2}\log|\Delta_{k(P)}|\Big)
+\Big(-\deg_{\ar}\big((\ov\L_2)|_{\ov E_P}\big)+\frac{1}{2}\log|\Delta_{k(P)}|\Big)\\[0.8em]
=&\deg_{\ar}\big((\ov\L_1-\ov\L_2)|_{\ov E_P}\big).
\end{aligned}\eqno(**)
$$ 
This coincides with the canonical numeration $\num_0$ for the coefficient of the space
$\big[W_{\ov \L_1,s_1}^{\ar}\big|W_{\ov \L_2,s_2}^{\ar}\big]_2$ obtained using Arakelov intersection
given in $(*)$.
\end{proof}

\subsection{Arithmetic Central Extension $k(X)^*_{W_{\ov \O_X}^{\ar}}$ }

To define arithmetic central extension $k(X)^*_{W_{\ov \O_X}^\ar}$, we first consider the 
action of $f\in k(X)^*$ on $W_{\ov \O_X}^\ar$. Assume for the time being that
$f$ does not vanish along $E_P$. Set 
$\mathrm{div}(f|_{E_P})=\sum_xn_x[x], \ \mathrm{div}(f|_{X_F})=\sum_Qm_Q[Q]$.
In terms of pure algebraic structures involved,
$$
W_{ \O_X}^\ar=\prod_{x\in E_P}\O_{E_P,x}((u))\times\prod_{Q\in X_F}\O_{X_F,Q}|_P((u)).
$$
Thus, algebraically, 
$$
f\cdot W_{ \O_X}^\ar=\prod_{x\in E_P}\frak m_{E_P,x}^{-n_x}((u))
\times\prod_{Q\in X_F}\O_{X_F}(Q)^{m_Q}|_P((u)).
$$ 
Accordingly, arithmetically, we define the action of $f$ on $W_{\ov \O_X}^\ar$ by
$$
\begin{aligned}
f W_{\ov \O_X}^\ar:=&\prod_{x\in E_P}\frak m_{E_P,x}^{-n_x}((u))
\times\prod_{Q\in X_F}\O_{X_F}(Q)^{m_Q}|_P((u)) \cdot{\bf e}\Big(\int_{X_\infty}-\log\|f\|d\mu\Big)\\
=&W_{\ov \O_X(\mathrm{div}_{\ar}(f)),\,f}^\ar\ .
\end{aligned}
$$
Here $\mathrm{div}_{\ar}(f)$ denotes the Arakelov divisor associated to the rational function $f$,
and we use  
$\ov \O_X(\mathrm{div}_{\ar}(f))$ to denote the metrized line bundle associated to the Arakelov divisor
$\mathrm{div}_{\ar}(f)$.

Motivated by the above discussion, for an arbitrary non-zero rational function $f$, let the action of $f$ 
on $W_{\ov \O_X,1}^\ar$ by 
$$
f\cdot W_{\ov \O_X,1}^\ar=W_{\ov \O_X(\mathrm{div}_{\ar}(f)),\,f}^\ar\ .
$$
Then we have the following

\begin{prop} For $f,\,g\in k(X)^*$, we have natural isometry
$$
[W_{\ov \O_X}^\ar:fW_{\ov \O_X}^\ar]_2\otimes [fW_{\ov \O_X}^\ar:fgW_{\ov \O_X}^\ar]_2
\cong [W_{\ov \O_X}^\ar:fgW_{\ov \O_X}^\ar]_2.
$$
\end{prop}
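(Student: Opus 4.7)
The plan is to translate the statement via Theorem 8 into the canonical associativity of $\Hom$-composition, and then to upgrade the resulting isomorphism to an isometry by invoking the Euler-characteristic computation of Lemma 7.

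First I would rewrite each of the three spaces appearing in the proposition in the canonical form $W^\ar_{\ov\L,s}$. By the construction preceding the Proposition, these correspond to the three pairs $(\ov\O_X,1)$, $(\ov\O_X(\di_\ar(f)),f)$, and $(\ov\O_X(\di_\ar(fg)),fg)$. Applying Theorem 8, each of the three brackets is identified with a $\Hom_\R$-torsor between the numerations $\Num\big({\cal A}_{\ov E_P,*}^{\,\,\Ar}(\ov\L,s)\big)$ attached to two of these pairs. The proposition then reduces to the standard composition isomorphism
$$\Hom_\R(A,B)\otimes\Hom_\R(B,C)\cong\Hom_\R(A,C)$$
applied to the triple $A,B,C$ of numerations for the three pairs above.

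To verify that this is truly an isometry and not merely an isomorphism, I appeal to Lemma 7, which identifies the $\num_0$ of the coefficient part of $\Num\big({\cal A}_{\ov E_P,*}^{\,\,\Ar}(\ov\L,s)\big)$ with $-\chi_\ar(\ov E_P,\ov\L|_{\ov E_P})$. The telescoping of the Euler characteristics across the triple $\ov\O_X$, $\ov\O_X(\di_\ar(f))$, $\ov\O_X(\di_\ar(fg))$ automatically matches the canonical numeration on both sides, while the Laurent-series components agree by clause (3) of Definition 6.

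The main obstacle is the bookkeeping at the archimedean places. Each $W^\ar_{\ov\L,s}$ carries the factor ${\bf e}\big(\int_{X_\infty}-\log\|s\|\,d\mu\big)$, and one must check that the archimedean contribution from $f$ tensored with that from $g$ reproduces the one from $fg$. This ultimately rests on $\log\|fg\|=\log\|f\|+\log\|g\|$ at every infinite place together with $\di_\ar(fg)=\di_\ar(f)+\di_\ar(g)$ on the finite side. One also needs to interchange the inverse limit defining $[\,\cdot\,|\,\cdot\,]_2$ with the tensor product, which is legitimate once one selects a common subbundle $\ov\L\hookrightarrow\ov\O_X,\,\ov\O_X(\di_\ar(f)),\,\ov\O_X(\di_\ar(fg))$ computing all three brackets simultaneously.
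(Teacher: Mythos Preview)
Your argument via Theorem~8 is correct, but it is a genuinely different route from the paper's. The paper stays entirely on the intersection-theoretic side: it unfolds Definition~5 for each bracket as an inverse limit of $\Hom_\R\big(\Num(W_{\ov\L_1}/W_{\ov\L}),\Num(W_{\ov\L_2}/W_{\ov\L})\big)$, merges the two inverse limits by passing to a common subbundle $\ov\L\hookrightarrow\ov\O_X,\ov\O_X(\di_\ar(f)),\ov\O_X(\di_\ar(fg))$, and then applies the $\Hom(A,B)\otimes\Hom(B,C)\cong\Hom(A,C)$ contraction directly on the quotient numerations. The ``trivial cancellation'' $\di_\ar(f)+(\di_\ar(fg)-\di_\ar(f))=\di_\ar(fg)$ is all that is needed, via the $\num_0$ of Proposition~1; no appeal to the cohomological complex or Theorem~8 is made. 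Your approach, by contrast, transports everything through Theorem~8 to the cohomological numerations $\Num\big({\cal A}_{\ov E_P,*}^{\Ar}(\ov\L,s)\big)$ and composes there. This is cleaner once Theorem~8 is available, but uses a stronger result than is strictly needed.

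Two minor redundancies in your write-up: first, your separate appeal to Lemma~7 to check isometry is unnecessary, since Theorem~8 already asserts a canonical isometry of metrized $\R$-torsors; second, your final remark about interchanging the inverse limit with the tensor product and choosing a common subbundle is exactly the mechanism of the paper's direct proof, but it plays no role in your own argument once you have invoked Theorem~8 (which has already absorbed the inverse limit).
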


\begin{proof} Essentially, this is because arithmetic intersection works well.
Indeed, by our definition, 
$\dis{f W_{\ov \O_X}^\ar=W_{\ov \O_X(\mathrm{div}_{\ar}(f))}^\ar}.$ 
Hence, it suffices to prove that there is an isometry
{\footnotesize
$$[W_{\ov \O_X}^\ar:W_{\ov \O_X(\mathrm{div}_{\ar}(f))}^\ar]_2
\otimes [W_{\ov \O_X(\mathrm{div}_{\ar}(f))}^\ar:W_{\ov \O_X(\mathrm{div}_{\ar}(fg))}^\ar]_2
\cong [W_{\ov \O_X}^\ar:W_{\ov \O_X(\mathrm{div}_{\ar}(fg))}^\ar]_2.
$$\\[-3em]}

\noindent
But, similarly, as in the proof of Prop. 2, this is clear by the definition of $\num_0$ in terms of intersections, 
due to the following trivial cancelation
$$
\mathrm{div}_{\ar}(f)+\Big(-(\mathrm{div}_{\ar}(f))+(\mathrm{div}_{\ar}(fg))\Big)
=(\mathrm{div}_{\ar}(fg)).
$$
That is, 
\footnotesize{$$
\begin{aligned}
&[W_{\ov \O_X}^\ar:W_{\ov \O_X(\mathrm{div}_{\ar}(f))}^\ar]_2
\otimes [W_{\ov \O_X(\mathrm{div}_{\ar}(f))}^\ar:W_{\ov \O_X(\mathrm{div}_{\ar}(fg))}^\ar]_2\\[0.2em]
=&\lim_{\substack{\law\\ (\ov \L,s),\ \L\hookrightarrow \O_X\\ \L\hookrightarrow \O_X(\mathrm{div}_{\ar}(f))}}
\Hom_\R\Big(\Num\big(W_{\ov \O_X,1}^{\ar}\big/W_{\ov \L,s}^{\ar}\big),
\Num\big(W_{\ov \O_X(\mathrm{div}_{\ar}(f)),f}^{\ar}\big/W_{\ov \L,s}^{\ar}\big)\Big)\\
\otimes&
\lim_{\substack{\law\\ (\ov \L,s)\ \L\hookrightarrow \O_X(\mathrm{div}_{\ar}(f))\\  \L\hookrightarrow\O_X(\mathrm{div}_{\ar}(fg))}}
\Hom_\R\Big(\Num\big(W_{\ov \O_X(\mathrm{div}_{\ar}(f)),f}^{\ar}\big/W_{\ov \L,s}^{\ar}\big),
\Num\big(W_{\ov \O_X(\mathrm{div}_{\ar}(fg)),fg}^{\ar}\big/W_{\ov \L,s}^{\ar}\big)\Big)\\[0.2em]
\cong&\lim_{\substack{\law\\ (\ov \L,s)\ \L\hookrightarrow \O_X, \O_X(\mathrm{div}_{\ar}(f))\\  \L\hookrightarrow \O_X(\mathrm{div}_{\ar}(fg))}}
\Big(\Hom_\R\Big(\Num\big(W_{\ov \O_X,1}^{\ar}\big/W_{\ov \L,s}^{\ar}\big),
\Num\big(W_{\ov \O_X(\mathrm{div}_{\ar}(f)),f}^{\ar}\big/W_{\ov \L,s}^{\ar}\big)\Big)\\
&\hskip 2.0cm\otimes
\Hom_\R\Big(\Num\big(W_{\ov \O_X(\mathrm{div}_{\ar}(f)),f}^{\ar}\big/W_{\ov \L,s}^{\ar}\big),
\Num\big(W_{\ov \O_X(\mathrm{div}_{\ar}(fg)),fg}^{\ar}\big/W_{\ov \L,s}^{\ar}\big)\Big)\Big)\\[0.2em]
\cong&\lim_{\substack{\law\\ (\ov \L,s)\\ \L\hookrightarrow \O_X,  \O_X(\mathrm{div}_{\ar}(fg))}}
\Hom_\R\Big(\Num\big(W_{\ov \O_X,1}^{\ar}\big/W_{\ov \L,s}^{\ar}\big),
\Num\big(W_{\ov \O_X(\mathrm{div}_{\ar}(fg)),fg}^{\ar}\big/W_{\ov \L,s}^{\ar}\big)\Big)\\[0.2em]
=& [W_{\ov \O_X}^\ar:W_{\ov \O_X(\mathrm{div}_{\ar}(fg))}^\ar]_2.
\end{aligned}
$$}
\end{proof}
Consequently, we may make the following

\begin{defn} We define an arithmetic central extension  group $k(X)^*_{W_{\ov \O_X}^\ar}$ by the following data:
\begin{itemize}
\item [(a)] As a set, its elements are given by pairs $(f,\alpha)$ with $f\in k(X)^*, \alpha\in [W_{\ov \O_X}^\ar:fW_{\ov \O_X}^\ar]_2$;
\item [(b)] For the group law, its multiplication is defined by $$(f,\alpha)\circ(g,\beta):=(fg,\alpha\circ f(\beta),$$
where $$\alpha\circ f(\beta):=\alpha\otimes f(\beta).$$
\end{itemize}
\end{defn}

Indeed, since $\alpha\in [W_{\ov \O_X}^\ar:fW_{\ov \O_X}^\ar]_2,\ \beta\in [W_{\ov \O_X}^\ar:gW_{\ov \O_X}^\ar]_2$, we have
$$
f(\beta)\in [fW_{\ov \O_X}^\ar:fgW_{\ov \O_X}^\ar]_2,\qqan\alpha\circ f(\beta)\in  [W_{\ov \O_X}^\ar:fgW_{\ov \O_X}^\ar]_2.
$$
So (b) is well-defined.\\

\subsection{Splitness}

Concerning the group $k(X)^*_{W_{\ov \O_X}^\ar}$, we have the following

\begin{prop} We have 
\begin{itemize}
\item [(1)] The group $k(X)^*_{W_{\ov \O_X}^\ar}$ is a central extension of $k(X)^*$ by $\R$. In particular, we have
an exact sequence
$$
0\to \R\raw k(X)^*_{W_{\ov \O_X}^\ar}\raw k(X)^*\to 1.
$$
\item [(2)] The short exact sequence (1) splits.
\end{itemize}
\end{prop}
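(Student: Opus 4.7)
The plan is to verify (1) directly from the definitions, exploiting the associativity isomorphism established in the preceding proposition, and then to construct an explicit splitting for (2) using the canonical multiplication-by-$f$ map on $W_{\ov\O_X}^\ar$.

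For (1), observe first that when $f=1$ one has $fW_{\ov\O_X}^\ar=W_{\ov\O_X}^\ar$, so the torsor $[W_{\ov\O_X}^\ar:W_{\ov\O_X}^\ar]_2$ is the self-$\Hom_\R$ torsor of a single metrized $\R$-torsor, hence canonically identified with $\R$. This shows the kernel of $\pi:(f,\alpha)\mapsto f$ is exactly $\R$, while surjectivity of $\pi$ is built into the construction. Associativity of the operation $\circ$ follows directly from the preceding proposition together with functoriality of the $f$-action. For centrality of the $\R$-subgroup: since $f$ acts as the identity on scalars in $\R$, we have $(1,\alpha)\circ(f,\beta)=(f,\alpha\otimes\beta)=(f,\beta\otimes\alpha)=(f,\beta)\circ(1,\alpha)$ by commutativity of $\R$.

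For (2), I will exhibit a set-theoretic section $\sigma:k(X)^*\to k(X)^*_{W_{\ov\O_X}^\ar}$ of the form $f\mapsto(f,\sigma_f)$ and verify that it is multiplicative. The element $\sigma_f\in[W_{\ov\O_X}^\ar:fW_{\ov\O_X}^\ar]_2$ is extracted from the multiplication-by-$f$ isomorphism $m_f:W_{\ov\O_X,1}^\ar\to W_{\ov\O_X(\mathrm{div}_\ar(f)),f}^\ar$: for each auxiliary pair $(\ov\L,s)$ with $\L\hookrightarrow \O_X,\O_X(\mathrm{div}_\ar(f))$, multiplication by $f$ descends to an algebraic isomorphism of the relevant quotients, and after incorporating the metric discrepancy (contributions from $\log\|f\|$ at infinite places and from valuation shifts at finite places on $E_P$) it induces a canonical $\R$-linear morphism between the two $\Num$ spaces. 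These morphisms are compatible across the projective system defining $[\,\cdot\,:\cdot\,]_2$, producing the desired element $\sigma_f$ in the limit. The cocycle identity $\sigma_{fg}=\sigma_f\circ f(\sigma_g)$ then reduces formally to the compositional identity $m_{fg}=m_f\circ(f\cdot m_g)$ combined with the additivity $\mathrm{div}_\ar(fg)=\mathrm{div}_\ar(f)+\mathrm{div}_\ar(g)$ and of $\log\|\cdot\|$.

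The main obstacle will be verifying that $m_f$ actually descends to a canonical element of the metrized $\R$-torsor rather than one determined only up to $\R^*$-ambiguity. This step requires the comparison (Case II above) between the numerations coming from arithmetic intersection and those from arithmetic adelic cohomology, together with the arithmetic Riemann--Roch on $\ov E_P$ and the Arakelov adjunction formula. Under these identifications, $\sigma_f$ corresponds to the class of the identity morphism, and the metric contributions from the infinite places precisely cancel those from the finite places because $\mathrm{div}_\ar(f)$ is a principal Arakelov divisor. Once this compatibility is in place, the cocycle condition is formal and $\sigma$ provides a group-theoretic splitting of the exact sequence.
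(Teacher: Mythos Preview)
Your treatment of (1) is fine and matches the paper's one-line dismissal.

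For (2) your approach and the paper's share the same key ingredient---the comparison of Theorem~8 between the intersection-theoretic torsor $[W_{\ov\O_X}^\ar:fW_{\ov\O_X}^\ar]_2$ and the cohomological torsor $\Hom_\R\big(\Num({\cal A}_{\ov E_P,*}^{\,\,\Ar}(\ov\O_X)),\Num({\cal A}_{\ov E_P,*}^{\,\,\Ar}(f\ov\O_X))\big)$---but you invoke it differently, and your direct construction has a gap.

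Concretely: you propose to obtain $\sigma_f$ by letting multiplication by $f$ ``descend to an algebraic isomorphism of the relevant quotients.'' But the quotients appearing in the definition of $[W_{\ov\O_X}^\ar:fW_{\ov\O_X}^\ar]_2$ are $W_{\ov\O_X}^\ar/W_{\ov\L}^\ar$ and $fW_{\ov\O_X}^\ar/W_{\ov\L}^\ar$ for the \emph{same} auxiliary $W_{\ov\L}^\ar$. Multiplication by $f$ sends the first to $fW_{\ov\O_X}^\ar/fW_{\ov\L}^\ar$, not to $fW_{\ov\O_X}^\ar/W_{\ov\L}^\ar$, so $m_f$ does not induce a map between the torsors in the projective system. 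Your later appeal to the cohomological comparison is what actually does the work, but at that point you are no longer constructing $\sigma_f$ from $m_f$ in the way you describe.

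The paper avoids this by building a second central extension $k(X)^{*\prime}_{W_{\ov\O_X}^\ar}$ whose fiber over $f$ is the cohomological $\Hom$-torsor above. In that picture multiplication by $h$ acts on the entire adelic complex ${\cal A}_{\ov E_P,*}^{\,\,\Ar}(\ov\O_X)$ (not on quotients by a fixed $W_{\ov\L}^\ar$), so it genuinely furnishes an element of the fiber and hence a section $h\mapsto(h,h)$. Theorem~8 then gives an isomorphism of central extensions $k(X)^*_{W_{\ov\O_X}^\ar}\cong k(X)^{*\prime}_{W_{\ov\O_X}^\ar}$, and splitness transfers. This is cleaner than trying to push $m_f$ through the intersection-side projective limit and then repairing the mismatch via Theorem~8 after the fact.
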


\begin{proof}
(1) follows directly from the definition.  As for (2), we introduce one more central extension 
${k(X)^{*'}_{W_{\ov \O_X}^\ar}}$ of the group $k(X)^*$ by $\R$ as follows:
\begin{itemize}
\item [(a)] As a set, its elements are given by pairs $(f,\alpha')$, where $f\in k(X)^*$ and 
$\alpha'\in\Hom_\R\big(\Num({\cal A}_{\ov E_P,*}^{\,\,\Ar}(\ov \O_X), \Num({\cal A}_{\ov E_P,*}^{\,\,\Ar}(f\ov \O_X))\big)$;
\item[(b)] For the group law, its multiplication is defined by 
$$
(f,\alpha')\circ(g,\beta'):=(fg,\alpha'\circ f(\beta').
$$
\end{itemize}
Now it is sufficient to use  Theorem 8. Indeed, by Theorem 8, we obtain a natural isomorphism
$k(X)^*_{W_{\ov \O_X}^\ar}\simeq {k(X)^{*'}_{W_{\ov \O_X}^\ar}}.$ On the other hand, 
the central extension ${k(X)^{*'}_{W_{\ov \O_X}^\ar}}$ splits, since it admits the following
natural section: an element $h\in k(X)^*$
takes $\Num({\cal A}_{\ov E_P,*}^{\,\,\Ar}(\ov \O_X))$ to 
$\Num({\cal A}_{\ov E_P,*}^{\,\,\Ar}(\ov \O_X(\mathrm{div}_\ar(h)))).$ 
That is, $h\in \Hom_\R\big(\Num({\cal A}_{\ov E_P,*}^{\,\,\Ar}(\ov \O_X)),
\,\Num({\cal A}_{\ov E_P,*}^{\,\,\Ar}(\ov \O_X(\mathrm{div}_\ar(h))))\big)$. 
This implies that the central extension  ${k(X)^{*'}_{W_{\ov \O_X}^\ar}}$ splits.
\end{proof}

\subsection{Reciprocity Symbols from Points and Curves}

Let $\pi:X\raw \mathrm{Spec}\,\O_F$ be a regular arithmetic surface defined over number field $F$. 
Let $C$ be a complete irreducible vertical curve or an irreducible horizontal curve, and $x$ a closed point 
of $C$. Set $H:=\mathrm{Frac}\big(\widehat\O_{X,x}\big)^*$.
Then with respect to the curve $C$ at $x$, resp. the closed point $x$ along $C$, there exists a natural central extension
$$0\to\R\raw\widehat H_{B_x}\raw H\to 1\eqno(A)$$
resp.
$$0\to\R\raw\widehat H_{\O_{K_{C,x}}}\raw H\to 1\eqno(B)$$
defined as follows:

\begin{itemize}
\item [(a)] As a set, its elements are given by pairs $(f,\alpha)$, resp. by pairs $(g,\beta)$, with
$f\in H$ and $\alpha\in [B_x|fB_x]_2$, resp. $g\in H$ and $\beta\in [\O_{K_{C,x}}|g\O_{K_{C,x}}]_2$;
\item[(b)]  For the group law, its multiplication is defined by
\end{itemize}
$$
(f_1,\alpha_1)\circ (f_2,\alpha_2)=(f_1f_2,\alpha_1f_1(\alpha_2)),\ \  \mathrm{resp}.\ \ 
(g_1,\beta_1)\circ (g_2,\beta_2)=(g_1g_2,\beta_1f_1(\beta_2)).
$$

\begin{defn}
For $f_1, f_2\in H$,
define a reciprocity symbol $[f_1,f_2]_{x,C}$, resp. $[f_1,f_s]_{C,x}$, by
$$
[f_1,f_2]_{x,C}:=[f_1',f_2']_A, \quad \mathrm{resp}.\quad [f_1,f_2]_{C,x}:=[f_1\mydprime ,f_2\mydprime ]_B,
$$
where
$f_1', f_2'$, resp. $f_1\mydprime , f_2\mydprime $, are the lifts of elements $f_1, f_2$ of $H$ via 
extension (A) to  $\dis{\widehat H_{B_x}}$, resp. 
via extension (B) to $\widehat H_{\O_{K_{C,x}}},$ and   $[f_1',f_2']_A$, resp. $[f_1\mydprime ,f_2\mydprime ]_B$, 
denotes the commutative of elements
$f_1', f_2'$, resp. $f_1\mydprime , f_2\mydprime $, in $\widehat H_{B_x}$, resp. in $\widehat H_{\O_{K_{C,x}}}$.
\end{defn}

We have the following

\begin{prop}
For $f_1,f_2\in H$, $$ [f_1,f_2]_{x,C} \cdot [f_1,f_2]_{C,x}=1.$$
\end{prop}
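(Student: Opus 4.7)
The plan is to derive this reciprocity from the Arithmetic Reciprocity Law (Proposition B) applied to an appropriate ambient $\R$-space with two commensurable subspaces whose intersection and sum carry trivial commutator pairings. Set $V := K_{C,x}$, viewed as the natural ambient two--dimensional local structure around $(C,x)$, so that multiplication by $f \in H$ gives an automorphism of $V$ preserving commensurability classes of both $A := B_x$ and $B := \O_{K_{C,x}}$. Under these identifications, the central extensions $(A)$ and $(B)$ appearing in Definition~10 are precisely the arithmetic central extensions $\widehat{\mathrm{GL}}^{\Ar}$ of Proposition A built from the lines $\overline{(A | fA)}$ and $\overline{(B | fB)}$, respectively, and the reciprocity symbols $[f_1,f_2]_{x,C}$ and $[f_1,f_2]_{C,x}$ are the commutators $\langle f_1,f_2\rangle_A$ and $\langle f_1,f_2\rangle_B$ from the discussion following Proposition A.

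First I would check carefully that $f_1,f_2$ commute in $H$ (which is automatic since $H$ is a field), so that the pairings $\langle f_1,f_2\rangle_A$ and $\langle f_1,f_2\rangle_B$ land in $\R^*$ and match the defined symbols. Next I would apply Proposition B directly with $g=f_1$, $h=f_2$ to obtain
$$
\langle f_1,f_2\rangle_A\cdot \langle f_1,f_2\rangle_B
\;=\;\langle f_1,f_2\rangle_{A\cap B}\cdot \langle f_1,f_2\rangle_{A+B}.
$$
The whole argument then reduces to showing that the right-hand side equals $1$, i.e., that the central extensions associated with the lattices $A\cap B$ and $A+B$ are both (canonically) split over the pair $(f_1,f_2)$.

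The vanishing of $\langle f_1,f_2\rangle_{A\cap B}$ is expected because $A\cap B = B_x \cap \O_{K_{C,x}}$ is essentially the maximal compact/bounded lattice at $(C,x)$: for any $f\in H$, the line $\overline{(A\cap B \mid f(A\cap B))}$ admits a canonical nonzero element given by the Arakelov data of the divisor of $f$ localized at $x\in C$, and the contraction $\bar\alpha$ respects this choice, giving a functorial section. The vanishing of $\langle f_1,f_2\rangle_{A+B}$ is dual: here $A+B$ is (up to commensurability) the full $K_{C,x}$, so the quotients $(A+B)/f(A+B)$ collapse to trivially metrized one-dimensional lines with a canonical basis, again producing a section of the central extension. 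I would argue both cases uniformly by invoking the same structural mechanism used in the splitness arguments of Proposition~14 and its analogue for curves: the central extension trivializes whenever $f\mapsto \overline{(L | fL)}$ lifts to a monoidal functor, which occurs precisely when $L$ is either a divisorially defined $\O$-lattice or the whole ambient space.

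The main obstacle will be the verification that the metric discrepancies $\gamma(\overline{\alpha})$ introduced in the arithmetic refinement do not obstruct the splitting on the $A\cap B$ and $A+B$ sides, i.e., that the canonical sections I write down are genuinely isometric (not merely algebraic). This requires a careful tracking of the Arakelov normalization conventions at $x$ and along $C$; in particular it must use that for the intersection and sum lattices the arithmetic discrepancy vanishes, which in turn reflects the compatibility of Arakelov intersection (at the point $x$ of the curve $C$) with both local data. Once this compatibility is established, the right-hand side of the displayed identity is trivially $1\otimes 1$, and Proposition B yields $[f_1,f_2]_{x,C}\cdot [f_1,f_2]_{C,x}=1$.
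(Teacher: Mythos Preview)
Your proposal has a genuine gap at its foundation: you identify the central extensions $(A)$ and $(B)$ of \S2.6 with the arithmetic central extensions $\widehat{\mathrm{GL}}^{\Ar}$ of Proposition~A, but these live at different ``levels.'' The construction in Proposition~A uses the metrized determinant lines $\overline{(A|fA)}$, which require $A$ and $fA$ to be commensurable in the sense that $A/(A\cap fA)$ and $fA/(A\cap fA)$ are \emph{finite-dimensional} $\R$-spaces. For $A=B_x=\O_{E_P,x}((u))$ and $f$ a uniformizer of $\O_{E_P,x}$, the quotient $B_x/fB_x \cong k_x((u))$ is infinite-dimensional, so Proposition~B does not apply as stated. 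The extensions $(A)$ and $(B)$ are instead built from the \emph{second-level} torsors $[B_x|fB_x]_2$ and $[\O_{K_{C,x}}|f\O_{K_{C,x}}]_2$, which are $\R$-torsors obtained via the $\Num$ functor on ind-pro objects, not determinant lines of finite-dimensional quotients. Even if one grants a level-2 analogue of Proposition~B (which exists in Osipov's framework but which you have not established here), your claim that $A+B=B_x+\O_{K_{C,x}}$ is ``(up to commensurability) the full $K_{C,x}$'' is false: an element $a\,u^{-1}$ with $a\in k(E_P)_x\smallsetminus\O_{E_P,x}$ does not lie in $B_x+\O_{K_{C,x}}$, and such elements span an infinite-dimensional space modulo it, so the extension attached to $A+B$ has no reason to split for the reason you give.

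The paper's argument bypasses the $A\cap B$, $A+B$ decomposition altogether. It exhibits, for $\L\subset\L'$, a natural isomorphism
\[
[B_x\otimes\L\,|\,B_x\otimes\L']_2\otimes_\R[\O_{K_{C,x}}\otimes\L\,|\,\O_{K_{C,x}}\otimes\L']_2
\;\longrightarrow\;\Hom_\R\big(\Num({\cal A}_{C,x}(\L)),\Num({\cal A}_{C,x}(\L'))\big),
\]
coming from the local adelic complex ${\cal A}_{C,x}$ (the local analogue of Theorem~8). The right-hand side carries a tautological section $h\mapsto h\cdot(-)$, so the Baer-sum extension $\widehat H_{B_x,\O_{K_{C,x}}}$ built from the left-hand side splits, and the product of the two commutators is $1$. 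The mechanism is cohomological (splitness via the local adelic $\Num$), not lattice-theoretic via intersections and sums.
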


\begin{proof} 
For vertical curves, this proposition is simply Prop 13 of \cite{O1}. For horizontal curves, the same proof works. 
Indeed, similarly, for any two invertible sheaves $\L,\,\L'$ satisfying $\L\subset \L'$, there exists
a natural isomorphism
$$\begin{aligned}~
[B_x\otimes_{\widehat\O_{X,x}}\L \big| B_x \otimes_{\widehat\O_{X,x}} \L' ]_2
&\otimes_\R[\O_{K_{C,x}}\otimes_{\widehat\O_{X,x}} \L\big| \O_{K_{C,x}}\otimes_{\widehat\O_{X,x}} \L']_2\\
&\qquad \raw\Hom_\R\big(\Num({\cal A}_{C,x}(\L)),\Num({\cal A}_{C,x}(\L'))\big).\end{aligned}
$$
Consequently, if we introduce a third central extension $\widehat H_{B_x, \O_{K_{C,x}}}$ of $H$ by:

\begin{itemize}
\item [(a)] As a set, its elements are given by pairs $(f,\gamma)$ with
$f\in H$ and $\gamma\in [B_x|fB_x]_2\otimes_\R  [\O_{K_{C,x}}|g\O_{K_{C,x}}]_2$;
\item[(b)]  For the group law, its multiplication is defined by 
$$
(f_1,\gamma_1)\circ (f_2,\gamma_2)=(f_1f_2,\gamma_1f_1(\gamma_2)).
$$
\end{itemize}
\noindent
Then $\widehat H_{B_x, \O_{K_{C,x}}}$ splits.
\end{proof}

\subsection{Proof of Reciprocity Laws}
We may use same proofs as in  Theorem 2, resp. Theorem 3 of \cite{O1} to prove
our reciprocity law around a point, resp. along an irreducible complete curve, for arithmetic surfaces, 
\cite{O1} works on algebraic surfaces over finite fields. This is because in this two cases above, 
only finite part of arithmetic surface is involved. 

Thus, it suffices for us to prove reciprocity laws along horizontal curves for arithmetic surfaces. 

For this purpose, we set 
$$
M_{\ov E_P}^\ar:=\prod_{x\in E_P}K_{E_P,x}\times \prod_{Q\in X_F}K_{X_F,Q}\big|_P\widehat\otimes_\Q \R.
$$ 
Then for $f,g\in k(X)^*$, set $S_{E_P}$ to be the collection of places $x$ of $F$ such that $x$ is 
in the union of the support of $\mathrm{div}(f|_{E_P})$ and the support of $\mathrm{div}(g|_{E_P})$, 
and let $S_{X_F}$ to be the collection of points $Q$ of $X_F$ such that $Q$ is 
in the union of the support of $\mathrm{div}(f|_{X_F})$ and the support of $\mathrm{div}(g|_{X_F})$.
Let 
$$
M_{f,g}^\ar:=\prod_{x\in S_{E_P}}K_{E_P,x}\times \prod_{Q\in S_{X_F}}K_{X_F,Q}\big|_P\widehat\otimes_\Q \R
$$ 
be a combination of factors of $M_{\ov E_P}^\ar$, and define  $M^-_{f,g}$ to be its cofactor in 
$M_{\ov E_P}^\ar$  so that we have
$$
M_{\ov E_P}^\ar=M_{f,g}^\ar\times M_\ar^{f,g}.
$$

To go further,  we use the  central extension  $k(X)^*_{W_{\ov \O_X}^\ar}$ of $k(X)^*$ 
with respect to $W_{\ov \O_X}^\ar$. By its splitness proved in Prop. 10  of \S2.5, 
we have the associated reciprocity symbol $[*,*]_{W_{\ov \O_X}^\ar}$ vanishes. 
Similarly, we can also introduce the central extension  of 
$k(X)^*$ with respect to $M_{f,g}^\ar$ and $M^{f,g}_\ar$ to get first the groups $k(X)^*_{M_{f,g}^\ar}$ and 
$k(X)^*_{M^{f,g}_\ar}$ and hence the associated reciprocity symbols $[*,*]_{M_{f,g}^\ar}$ and 
$[*,*]_{M^{f,g}_\ar}$. Based on the techniques developed in \S2.2, it is rather direct to show that
$$
[*,*]_{W_{\ov \O_X}^\ar}=[*,*]_{M_{f,g}^\ar}\,+\,[*,*]_{M^{f,g}_\ar}.\eqno(*_3)
$$ 
Moreover, since $f,\,g$ keep $M^{f,g}_\ar$ unchanged, we have 
$$
[f,g]_{M^{f,g}_\ar}=0\qquad\forall f,g\in k(X)^*.
$$
Therefore, we have
$$
0=[f,g]_{W_{\ov \O_X}^\ar}=[f,g]_{M_{f,g}^\ar}\,+\,[f,g]_{M^{f,g}_\ar}=[f,g]_{M_{f,g}^\ar}.
$$
Indeed, for any factor subspace $M$ of $W_{\ov \O_X}^\ar$, we can construct its associated central extension 
$k(X)^*_M$ and hence obtaining the reciprocity pairing $[*,*]_M$ such that $[*,*]$ is additive, and, 
if $f,\,g\in k(X)^*$ keep $M$ stable, then, with the proof of  Prop 4(4) of \cite{O1}, 
we have $[f,g]_M=0$. Consequently, by $(*_3)$,
$$
0=\sum_{x\in E_P}[f,g]_{B_x}+\sum_{Q\in X_F}[f,g]_{O_{X_F}(Q)}.
$$
This is the abstract reciprocity law.

By Prop. 6 of \S3.6, we have $[f,g]_{B_x}=[f,g]_{C,x}=\nu_{C,x}(f,g)\log q_x$. Hence, to complete the proof, it suffices
to show that
$$
[f,g]_{O_{X_F}(Q)}=\log\frac{|f_0(P)^{\nu_P(g)}|}{|g_0(P)^{\nu_P(f)}|}.
$$

\subsection{End of Proof}

This now becomes very simple. Without loss of generality, we assume that there is only one place, a complex one,
and we view $k(X_F)\widehat\otimes_\Q\R$ as a subspace of $\C((u))$. Then we are led to the central extension
$\C((u))^*_{\C[[u]]}$, consisting of elements $(f,\alpha)$ where $f\in \C[[u]]$ and $\alpha\in [\,\C[[u]]\,\big|\,f\C[[u]]\,]$.
Construct a natural metric on $\C((u))$ using the standard metric on $\C$ and  $(u^i,u^j)=\delta_{ij}$. 
Accordingly, by the detailed calculations carried out in \S A.2.6, with the trivial metrics on $\C((u))$ 
and hence over various spaces used, we have
$$
[f|g]_{\C[[u]]}=-\log\frac{|f_0(P)|^{\nu_P(g)}}{|g_0(P)|^{\nu_P(f)}}.
$$
This then ends our proof.

\vskip 0.20cm
\noindent
{\it Remark.} Note that the appearances of $-\log|f(P)|$ is not surprising. Indeed, 
when we numerate spaces $W_{\ov\L}^\ar$, we use the spaces $W_{\ov \L,s}^{\ar}$ defined by
{\footnotesize$$
W_{\ov \L,s}^{\ar}=\Big(\prod_{x\in E_P}\frak m_{E_P,x}^{-n_x}((u))\Big)
\times \Big(\prod_{Q\in X_F}\O_{X_F}(Q)^{\otimes m_Q}\big|_P((u))\,\widehat\otimes_\Q\R\Big)
\times {\bf e}\Big(\int_{X_\infty}-\log\|s\|d\mu\Big).
$$}
It is the additional factor ${\bf e}\Big(\int_{X_\infty}-\log\|s\|d\mu\Big)$ which makes the whole intersection 
theory and hence our construction work well.
For example, the canonical numeration $\num_0$ for the space $W_{\ov \L,s}^{\ar}$ is given by
$$
\begin{aligned}
&\num_0\Big(\Big(\prod_{x\in E_P}\frak m_{E_P,x}^{-n_x}\Big)
\times \Big(\prod_{Q\in X_F}\O_{X_F}(Q)^{\otimes m_Q}\big|_P\,\widehat\otimes_\Q\R\Big)
\times {\bf e}\Big(\int_{X_\infty}-\log\|s\|d\mu\Big)\Big)\\
=&\prod_{x\in E_P}q_x^{n_x}\prod_{Q\in X_F}G_\infty(P,\di(s_F))\cdot {\bf e}\Big(\int_{X_\infty}-\log\|s\|d\mu\Big).
\end{aligned}
$$
In particular, when $\ov \L=\ov \O_X(\mathrm{div}_\ar(f))$, then 
$\num_0\big(W_{\\ov \O_X(\mathrm{div}_\ar(f)),f}^{\ar}\big)=0$ by the product formula. Consequently, 
for different choices of rational sections $s$ and $s'$ of $\L$,
we have canonical isometry
$$
\Num\big(W_{\ov \L,s}^{\ar}\big)\cong \Num\big(W_{\ov \L,s'}^{\ar}\big).
$$
In particular, it makes  sense to introduce the space $W_{\ov \L}^\ar$. 
As a direct consequence, for rational function $f\in k(X_F)^*$, we obtain this  crucial term 
$-\sum_{\sigma|\infty}\log|f_\sigma|_\sigma$ and hence the 
reciprocity symbol at infinite.

\vskip 8.80cm
K. Sugahara \& L. Weng

Graduate School of Mathematics,
 
Kyushu University,
 
Fukuoka, 819-0395,
 
Japan
 
E-mails: k-sugahara@math.kyushu-u.ac.jp,
 
\hskip 2.30cm  weng@math.kyushu-u.ac.jp

\end{document}